\newtheorem{thm}{Theorem}[section]
\newtheorem{lem}[thm]{Lemma}
\theoremstyle{remark}
\newtheorem{rem}[thm]{Remark}
\numberwithin{equation}{section}
\def\({\Bigl(}
\def\){\Bigr)}
\newcommand{\al}{\alpha}
\def\vz{\varepsilon}
\def\oz{\omega}
\def\lz{\lambda}
\def\Lz{\Lambda}
\def\dz{\delta}
\def\gz{\gamma}
\def\sz{\sigma}
\def\iz{\infty}
\def\EE{\mathbb{E}}
\def\PP{\mathbb{P}}
\def\f{\frac}
\def\({\Bigl(}
\def \){ \Bigr)}
\def\vr{\varphi}
\def\bi{\bibitem}
 \def\d{{\delta}}
 \def\s{{\sigma}}
 \def\rb{{\mathbf r}}
 \def\PP{{\mathbb P}}
 \def\RR{{\mathbb R}}
        \def\supp{\operatorname{supp}}
\def\sa{\sigma}
\def\sz{\sigma}
\def\R{\mathbb{R}}
\def\lb{\langle}
\def\rb{\rangle}
\def\bi{\bibitem}
\begin{document}
\def\RR{\mathbb{R}}
\def\Exp{\text{Exp}}
\def\FF{\mathcal{F}_\al}
\def\ss{\mathbb{B}^d}

\title[] {Probabilistic and average
linear widths of  weighted Sobolev spaces on the ball  equipped with
a Gaussian measure}

\author{ Heping Wang} \address{School of  Mathematical Sciences,  Capital Normal
University
\\Beijing 100048,
 China}
\email{ wanghp@cnu.edu.cn.}

\keywords{ball,  average linear widths, probabilistic linear widths,
weighted Sobolev space, Gaussian measure.} \subjclass[2010]{41A46;
41A63; 42A61; 46C99}

\thanks{Supported by
the National Natural Science Foundation of China (Project No.
11271263),
 the  Beijing Natural Science Foundation (1132001), and BCMIIS.}

\begin{abstract}Let
$L_{q,\mu}$, $1\leq q\leq\infty$, denotes the weighted $L_q$ space
of functions on the unit ball $\ss$ with respect to weight
$(1-\|x\|_2^2)^{\mu-\frac12},\,\mu\ge 0$, and let $W_{2,\mu}^r$ be
the weighted Sobolev space on $\ss$ with a Gaussian measure $\nu$.
We investigate  the probabilistic   linear $(n,\dz)$-widths
$\lz_{n,\dz}(W_{2,\mu}^r,\nu,L_{q,\mu})$ and
 the $p$-average  linear $n$-widths
   $\lz_n^{(a)}(W_{2,\mu}^r,\mu,L_{q,\mu})_p$,  and obtain their  asymptotic orders  for all $1\le q\le \infty$ and $0<p<\infty$.
\end{abstract}

\maketitle
\input amssym.def

\section{Introduction}

Let $K$ be a bounded subset of a normed  linear space $X$ with norm
$\|\cdot\|_X$. The  linear and Kolmogorov $n$-widths of the set $K$
in $X$ are defined by $$ \lz_n(K,X):= \inf\limits_{T_n}\sup_{x\in
K}\|x-L_nx\|_X,$$and
$$d_n(K,X):=\inf_{F_n}\sup_{x\in K}\inf_{y\in
F_n}\|x-y\|_X,$$ respectively, where $T_n$ runs over all linear
operators from $X$ to $X$ with rank at most $n$, and $F_n$ runs
through all possible linear subspaces of $X$ with dimension at most
$n$. They reflect the optimal errors of the ``worst"  elements of
$K$ in the approximation by linear operators with rank $n$ and
$n$-dimensional subspaces.

 Now let $W$ be a separable Banach space and assume that $W$ contains a Borel field $\mathcal {B}$
consisting of open subsets of $W$ and is equipped with a probability
measure $\gamma$ defined on $\mathcal {B}$. For $0<p<\infty$, the
$p$-average linear and Kolmogorov $n$-widths are defined by
 $$
\lz_n^{(a)}(W,\gamma,X)_p=\inf_{L_n}\Big(\int_W \|x-L_nx\|_X^p
\,\gamma (dx)\Big)^{1/p},$$ and
$$ d_n^{(a)}(W,\gamma,X)_p=\inf_{F_n}\Big(\int_W
\inf_{y\in F_n}\|x-y\|_X^p\,\gamma (dx)\Big)^{1/p},$$ respectively.
They reflect the optimal approximation of  ``most" elements of
classes by linear operators with rank $n$ and $n$-dimensional
subspaces. We stress that for  a centered Gaussian measure, the
averaging parameter $p$ is irrelevant   up to a constant (see
\cite[Theorem 1.2]{Fe} or  \cite[Corollary 1]{WZ}).

Let $\dz\in [0,1)$. The  probabilistic linear and Kolmogorov
$(n,\delta)$-widths of a set $W$ with a measure $\gamma$ in the
space $X$ are defined by
$$ \lz_{n,\dz}(W,\gamma,X)=\inf_{G_\dz} \lz_n(W\backslash
G_\dz,X),$$and $$ d_{n,\dz}(W,\gamma,X)=\inf_{G_\dz} d_n(W\backslash
G_\dz,X),$$respectively, where $G_\dz$ runs through all possible
measurable subsets in $W$ with measure $\gamma(G_\dz)\le \dz$.
Hence, the probabilistic linear and Kolmogorov  $(n,\dz)$-widths can
be understood as the $\gamma$-distribution of the approximation on
all measurable subsets of $W$ by  linear operators with rank $n$ and
by $n$-dimensional subspaces. Therefore, the probabilistic case
setting reflects the intrinsic structure of the class, and compared
with the worst case setting, allows one to give deeper analysis of
the
 approximation for the function class.

This paper is devoted to discussing the average and probabilistic
linear widths of  weighted Sobolev spaces on the unit ball with a
Gaussian measure.
 Let $\ss=\{x\in\mathbb{R}^d:\    \  \|x\|_2\le 1\}$ denote the unit
ball in $\mathbb{R}^d$,
 where $x\cdot y$ is
the usual inner product, and $\|x\|_2=(x\cdot x)^{1/2}$ is the usual
Euclidean norm.   For the weight $W_\mu(x)=(1-\|x\|_2^2)^{\mu-1/2}\
(\mu\ge 0)$, denote by $L_{p,\mu}\equiv L_p(\ss, W_\mu(x)\,dx),\
1\le p<\infty$, the space of measurable functions defined on $\ss$
with the finite norm $$
\|f\|_{p,\mu}:=\Big(\int_{\ss}|f(x)|^p\,W_\mu(x)dx\Big)^{1/p}, \ \ \
1\le p<\infty,$$ and for $p=\infty$ we assume that $L_{\infty,\mu}$
is replaced by the space $C(\ss)$ of continuous functions on $\ss$
with the uniform norm.

We denote by $\Pi_n^d$ the space of all polynomials in $d$ variables
of degree at most $n$, and by $\mathcal{V}_n^d$ the space of all
polynomials of degree $n$ which are orthogonal to polynomials of low
degree in $L_{2,\mu}$. Note that
$$ a_n^d:={\rm dim}\,\mathcal{V}_n^d=\binom {n+d-1} n\asymp n^{d-1}.$$ It is
well known (see \cite[p. 38 or p. 229]{DX}) that the spaces
$\mathcal{V}_n^d$ are just the eigenspaces corresponding to the
eigenvalues $-n(n+2\mu+d-1)=:-\lz_n$ of the second-order
differential operator
$$D_\mu:=\triangle-(x\cdot\nabla)^2-(2\mu+d-1)\,x\cdot \nabla,$$
where the $\triangle$ and $\nabla$ are the Laplace operator and
gradient operator respectively. More precisely,
$$D_\mu P=-n(n+2\mu+d-1)P=-\lz_nP\ \ {\rm for} \ P\in \mathcal{V}_n^d.$$
Also, the spaces $\mathcal{V}_n^d$  are mutually orthogonal in
$L_{2,\mu}$ and
\begin{equation}\label{1.1}
 L_{2,\mu}= \bigoplus_{n=0}^\infty
\mathcal{V}_n^d, \quad \ \ \ \  \Pi_n^d= \bigoplus_{k=0}^n
\mathcal{V}_n^d . \end{equation} Let $$\{\phi_{nk}\equiv
\phi_{nk}^{d}\ |\ k=1,\dots, a_n^d\}$$ be a fixed orthonormal basis
for $\mathcal{V}_n^d$. Then  we know that
$$\{\phi_{nk}\ |\ k=1,\dots, a_n^d,\ n=0,1,2,\dots\}$$ is
an orthonormal basis for $L_{2,\mu}$ with inner product $$ \lb
f,g\rb :=\int_{\ss} f(x) g(x)W_\mu(x)\, dx.$$ Denote by $S_n$ the
orthogonal projector  of $L_{2,\mu}$ onto $\Pi_n^d$, which is called
the Fourier partial summation operator. Evidently, for any $ f\in
L_{2,\mu}$, \eqref{1.1} can be rewritten in the form
\begin{equation*}\label{1.2}f=\sum_{n=0}^\infty Proj_n f, \ \ \ \ \ S_n(f):=\sum_{k=0}^n Proj_k
f, \end{equation*}
 where $Proj_n$ is the
orthogonal  projector from $L_{2,\mu}$ onto $\mathcal{V}_n^d$ and
can be written as
\begin{equation}\label{1.3}Proj_n(f)(x)={\sum_{k=1}^{a_n^d}} \lb
\phi_{nk},f\rb\phi_{nk}(x) =\int_{\ss} f(y) P_n (x,y)W_\mu(y) \,
dy,\end{equation} where $P_n(x,y)=
\sum_{k=1}^{a_n^d}\phi_{nk}(x)\phi_{nk}(y)$ is the reproducing
kernel for $\mathcal{V}_n^d$.
 It is  known
that for $\mu>0$, the kernel $P_n(x,y)$ has the compact
representation (see \cite{Xu1})
\begin{equation}\label{1.4}P_n(x,y)=b_d^\mu b_1^{\mu-\frac 12}\frac
{n+\lz}{\lz}\int_{-1}^1C_n^\lz
\Big((x,y)+u\sqrt{1-\|x\|_2^2}\sqrt{1-\|y\|_2^2}\Big)(1-u^2)^{\mu-1}du.\end{equation}
 Here, $C_n^\lz$
is the $n$-th degree Gegenbauer polynomial, $\lz=\mu+\frac{d-1}2$,\\
$b_d^\gamma:=(\int_{\ss}(1-\|x\|_2^2)^{\gamma-1/2}dx)^{-1}$. See
\cite{Xu1} for  the proof of the formula of $P_n(x,y)$, including
the limiting case of $\mu = 0$.

  Given $r\in \Bbb R$, we define the fractional power $(-D_\mu^d)^{r/2}$ of
the operator $-D_\mu^d$
 on $f$  by
$$(-D_\mu^d)^{r/2} (f)= \sum_{k=1}^\infty (k(k+2\mu+d-1))^{r/2} Proj_k(f)=\sum_{k=1}^\infty \lz_k^{r/2} Proj_k(f), $$
in the sense of distribution.
 We call
$f^{(r)}:=(-D_\mu^d)^{r/2}(f)$ the  $r$-th-order derivative of the
distribution $f$.

 For $r>0$, the
weighted Sobolev space $W_{2,\mu}^r\equiv W_{2,\mu}^r(\ss)$  is
defined by \begin{align*} W_{2,\mu}^r:=\Big\{&f=\sum_{n=1}^{\infty}
Proj_n(f)=\sum_{n=1}^{\infty}{\sum_{k=1}^{a_n^d}} \lb
\phi_{nk},f\rb\phi_{nk}\ \Big|\ \int_{\ss}f(x)W_\mu(x)dx=0,\\
& \|f\|_{W_{2,\mu}^r}^2:=\lb
f^{(r)},f^{(r)}\rb=\sum_{n=1}^{\infty}\lz_n^r
 \|Proj_nf\|_{2,\mu}^2=
\sum_{n=1}^{\infty}\lz_n^r \sum_{k=1}^{a_n^d} |
\hat{f}_{nk}|^2<\infty \Big\}\end{align*} with inner product
$$\lb f,g\rb _r:=\lb f^{(r)},g^{(r)}\rb.$$ Obviously, it
is a Hilbert space. If $1\le q\le \infty,\
r>(d+2\mu)(\frac{1}{2}-\frac{1}{q})_+$, then the space $W_{2,\mu}^r$
can be continuously embedded into the space $L_{q,\mu}$ (see
\cite[Lemma 1]{WZ}).

We equip $W_{2,\mu}^r $ with a Gaussian measure $\nu$ whose mean is
zero and whose correlation operator $C_\nu$ has eigenfunctions
$\phi_{lk},\ k=1,\dots, a_l^d,\, l=1,2,\dots$ and eigenvalues
$$\nu_l=\lz_l^{-s/2} ,\,\ \
s>d,$$ that is, $$C_\nu \phi_{lk}=\lambda_l^{-s/2} \phi_{lk}, \ \ \
\ \, k=1,\dots, a_l^d,\ \ l=1,2,\dots.\ $$ Then (see \cite[pp.
48-49]{Bo}),
$$\lb C_\nu f,g\rb _r=\int_{W_{2,\mu}^{r}}  \lb f,h\rb_r \lb g,h\rb_r
\nu (dh).$$

Denote by $\big(W_{2,\mu}^r\big)^* $  the space of all continuous
 linear functionals on $W_{2,\mu}^r$, and by
 $L_2(W_{2,\mu}^r,\nu)$ the usual  space of $\nu$-measurable functionals $\phi$ on $W_2^r$ with finite
 norm
 $$\|\phi\|_{L_2(\nu)}:=\Big(\int_{W_{2,\mu}^r}|\phi(x)|^2\nu(dx)\Big)^{1/2}.$$   Then $\big(W_{2,\mu}^r\big)^*=
 W_{2,\mu}^r$ can be
 embedded
 into  $L_2(W_{2,\mu}^r,\nu)$.
 Put
 (see \cite[p. 44]{Bo})
$$H(\nu)=\Big\{g \in W_{2,\mu}^r:\,\,|g|_{H(\nu)}:=\sup_{f\in \big(W_{2,\mu}^r\big)^*
=W_{2,\mu}^r,\,R_\nu(f)(f)\leq 1}|\langle f,g \rangle|<\infty\,
\Big\},$$
 where
$$R_{\nu}(f)(g):=\int_{W_{2,\mu}^r}\langle h,f \rangle_r\, {\langle
 h,g
 \rangle}_r\,\nu(dh),\qquad\quad f\,,g\in \big(W_{2,\mu}^r\big)^*=
  W_{2,\mu}^r.$$
  The space  $H(\nu)$ is called the  {\bf Cameron-Martin space } (or the reproducing kernel Hilbert space) of $\nu$.
  Set $\rho=r+s/2$.
 It is easy to see from \cite[pp. 48-49]{Bo} that the Cameron-Martin  space
 $H(\nu)$ of the Gaussian measure
$\nu$ is $W_{2,\mu}^\rho$, i.e.,
\begin{equation}\label{2.15}H(\nu)=W_{2,\mu}^{\rho}\ \ \ {\rm and}\ \ \ (\cdot,\cdot)_{H(\nu)}
=\lb\cdot,\cdot\rb_{\rho}.\end{equation}Then the covariance of $\nu$
\begin{equation}\label{2.16}R_\nu(f)(g)=\int_{W_{2,\mu}^r}
\lb f,h\rb_r \lb g,h\rb_r \nu (dh)=\lb C_\nu f,g\rb _r=\lb C_\nu
f,C_\nu g\rb _\rho.  \end{equation}

For any fixed $f_1,\dots, f_n\in W_{2,\mu}^r$,  the random vector
 $(\lb f,f_1\rb_r,\dots,\lb f,f_n\rb_r)$  on the measurable
space $(W_{2,\mu}^r,\nu)$ has the centered Gaussian distribution
with covariance matrix  $\big(\lb C_\nu
f_i,f_j\rb_r\big)_{i,j=1,\dots,n}=\big(\lb C_\nu f_i,C_\nu
f_j\rb_\rho\big)_{i,j=1,\dots,n}$. In special, on the cylindrical
subsets, the measure $\nu$ has the form: let $g_{k}$,
$k=1,2,\dots,n$ be any orthonormal system of functions in
$L_{2,\mu}$, $g_k\in H(\nu)=W_{2,\mu}^\rho$,
 and let $D$ be an arbitrary Borel subset
of $\Bbb R^{n}$, then the measure of the cylindrical subset
$$G=\Big\{f\in W_{2,\mu}^r\,|\ (\lb f,g_{1}^{(\rho)}\rb,\cdots,\lb
f,g_{n}^{(\rho)}\rb)\in D\Big\}$$  is equal to
$$\mu(G)=(2\pi)^{-\frac{n}{2}}\int_D
\exp(-\frac{1}{2}\sum_{l=1}^n u_l^2)du_1\cdots du_n .$$ See
\cite{Bo} and \cite{Kuo} for more information about Gaussian measure
on Banach spaces. Throughout  the paper, we always suppose that
$\nu$ is the above Gaussian measure on $W_{2,\mu}^r$,
$r>(d+2\mu)(\frac{1}{2}-\frac{1}{q})_+,\ s>d$, and $\rho=r+s/2$.

The aim of the paper is to study the average and probabilistic
linear widths of  $W_{2,\mu}^r$  with the  measure $\nu$ in
$L_{q,\mu}$. Probabilistic and average widths has been studied only
recently
 and are closely related with some other different
problems,  such as $\vz$-complexity and the minimal error of
problems of function approximation and integration by using standard
or general linear information, in the probabilistic and average case
setting (see \cite{TWW}, \cite{Ri}). A few interesting results have
been obtained.
  These include results on
probabilistic and average Kolmogorov and linear widths  of a
 Sobolev space of single-variate periodic functions with a Gaussian
measure and the $C^r[0,1]$ space equipped with the $r$-fold Wiener
measure in the $L_q$ norm for  $1\le q\le\infty$ (see \cite{FY1,
FY2, M1, M2, M3, M4, M5, MW, SW1, Wc}),   of a space of multivariate
periodic Sobolev functions  equipped with a Gaussian measure in the
$L_q$ norm for $1<q<\infty$ (see \cite{CF1, CF2, WJZ}), and of a
Sobolev space $W_2^r({\Bbb M}^{d-1})$ with a Gaussian measure  on
compact two-point homogeneous spaces ${\Bbb M}^{d-1}$ for $1\le
q\le\infty$ (see \cite{Wh}).  In the worst and average case setting,
the orders of the  Kolmogorov and linear widths of
 weighted Sobolev classes  on $\ss$ in $L_{q,\mu}$ were presented
 in \cite{WH} and \cite{WZ} respectively.
  More information about  average and probabilistic case setting
results can be found in   \cite{Ri} and \cite{TWW}.

 In this paper, we investigate probabilistic  and
average  linear widths of the weighted Sobolev space $W_{2,\mu}^r$
 with the
 measure $\nu$
in $L_{q,\mu}$,  and obtain the sharp estimation. Our main results
(Theorems \ref{thm1.1}
 and \ref{thm1.2} below) can be formulated  as follows:

 \begin{thm}\label{thm1.1}\ \ Let $\dz\in(0,1/2],\
  1\le q\le\infty,$ and let $\rho>d/2+2\mu d(1/2-1/q)_+$.   Then
    \begin{equation}\label{1.6}\lz_{n,\dz}(W_{2,\mu}^r,\nu,L_{q,\mu})\asymp \bigg\{\begin{array}{ll}
n^{-\frac{\rho}{d}+\frac12}\big(1+n^{-\min \{\frac12,\frac
1q\}}(\ln(\frac 1\dz))^{\frac12}\big)
,  &1\le q<\infty,\\
 n^{-\frac{\rho}{d}+\frac12}\big(\ln(n/\dz)\big)^{1/2},
&q=\infty,\end{array}
  \end{equation}where $A(n,\delta)\asymp B(n,\dz)$ means $A(n,\dz)\ll B(n,\dz)$
 and $B(n,\dz)\ll A(n,\dz)$,  $A(n,\dz)\ll
B(n,\dz)$ means that there exists a positive constant $c$
independent of $n$ and $\dz$ such that $A(n,\dz) \le cB(n,\dz)$ for
any $n\in\Bbb N$ and $\delta\in (0,1/2]$.\end{thm}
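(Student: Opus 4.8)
The plan is to reduce the width to the $L_{q,\mu}$-behaviour of a single Gaussian random Fourier series and then prove matching upper and lower bounds; the first summand in \eqref{1.6} will come from the expected error of a fixed near-optimal scheme, the second from the Gaussian deviation needed to retain $\nu$-mass $1-\dz$. First I would fix the probabilistic model: by \eqref{2.15}--\eqref{2.16} and $H(\nu)=W_{2,\mu}^{\rho}$, the coefficients $\langle\phi_{lk},f\rangle$ of a $\nu$-distributed $f$ are independent centred Gaussians of variance $\lambda_l^{-\rho}$, so the random function $f$ has the same law as $\sum_{l,k}\xi_{lk}\lambda_l^{-\rho/2}\phi_{lk}$ with i.i.d. standard normal $\xi_{lk}$; the cylinder-set formula recorded just before Theorem~\ref{thm1.1} makes this precise. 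Everything then rests on estimating $\|\sum_{l,k}\xi_{lk}\lambda_l^{-\rho/2}\phi_{lk}\|_{q,\mu}$, for which I would import the Nikolskii inequalities and the pointwise bounds for the kernels $P_l(x,x)$ from \cite{DX,WZ}.

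\textbf{Upper bound.} Let $N\asymp n^{1/d}$ and take for $L_n$ a near-best (de la Vall\'ee Poussin type) linear operator $V_N$ that reproduces $\Pi_{N}^d$, has rank $\asymp n$, and is uniformly bounded on every $L_{q,\mu}$; such operators are built from the projectors $Proj_l$ in \cite{DX,WZ}. The error $f-V_Nf$ is a tail $\sum_{l>cN}Proj_lf$, which I split dyadically into blocks $g_j=\sum_{2^j\le l<2^{j+1}}Proj_lf$. For each block Fubini gives $\mathbb{E}\|g_j\|_{q,\mu}^{q}\asymp\int_{\ss}\sigma_j(x)^{q}W_\mu(x)\,dx$ with $\sigma_j(x)^2=\sum_{2^j\le l<2^{j+1}}\lambda_l^{-\rho}P_l(x,x)$; using $\int_{\ss}P_l(x,x)W_\mu=a_l^d\asymp l^{d-1}$ and $\lambda_l\asymp l^2$ this yields $\mathbb{E}\|g_j\|_{q,\mu}\asymp 2^{j(d/2-\rho)}$ (the hypothesis on $\rho$ ensuring the weight's boundary layer does not inflate this), whose sum over the tail is dominated by the lowest block $2^j\asymp N$ and equals the main term $n^{1/2-\rho/d}$. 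For the deviation I apply the Gaussian concentration inequality to the Lipschitz map $\xi\mapsto\|g_j\|_{q,\mu}$, whose Lipschitz constant is the norm of the synthesis map $\ell^2\to L_{q,\mu}$; by interpolation between $L_{2,\mu}$ and $L_{\infty}$ together with the kernel bounds this is of order $2^{-j\rho}2^{jd(1/2-1/q)_+}$ (up to boundary corrections absorbed by the hypothesis). Allocating the $\dz$-budget geometrically across $j$ and summing, the worst-case error on the retained set is $\ll n^{1/2-\rho/d}\big(1+n^{-\min\{1/2,1/q\}}(\ln(1/\dz))^{1/2}\big)$ for $q<\infty$. For $q=\infty$ the same scheme applies, but $\mathbb{E}\|g_j\|_{\infty}$ carries an extra factor $\sqrt{j}$ from the expected supremum of the band-limited Gaussian field over a Nikolskii net of cardinality $\asymp 2^{jd}$, and this $\sqrt{\ln n}$ combines with $\sqrt{\ln(1/\dz)}$ into $(\ln(n/\dz))^{1/2}$, giving the second line of \eqref{1.6}.

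\textbf{Lower bound.} Here I localise to one frequency block: choose $L\asymp n^{1/d}$ with $m:=a_L^d\ge 2n$ and restrict to $\mathcal{V}_L^d$, where $\nu$ pushes forward to a nondegenerate centred Gaussian on $\mathbb{R}^{m}$ with common coordinate variance $\lambda_L^{-\rho}\asymp n^{-2\rho/d}$. Any rank-$n$ operator leaves a subspace of dimension $\ge m-n\ge n$ uncontrolled, so a standard finite-dimensional linear-width lower bound (cf. \cite{WZ}) reduces matters to bounding $\|h\|_{q,\mu}$ from below for a Gaussian random $h\in\mathcal{V}_L^d$ off a set of measure $\dz$. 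A two-sided $L_{q,\mu}$-norm estimate for random polynomials (again via \cite{DX,WZ}), together with a Gaussian anti-concentration (small-ball) bound — quantifying the surviving mass through the explicit cylindrical-measure formula of the excerpt — produces the matching lower bound; the factor $(\ln(1/\dz))^{1/2}$, respectively $(\ln(n/\dz))^{1/2}$ for $q=\infty$, arises from the lower Gaussian tail of the relevant linear, respectively extremal, functional.

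\textbf{Main obstacle.} The crux is the sharp two-sided control of $\|\sum_{l,k}\xi_{lk}\lambda_l^{-\rho/2}\phi_{lk}\|_{q,\mu}$ on the weighted ball: matching the upper concentration bound with a lower anti-concentration bound at the \emph{same} scale, uniformly across dyadic blocks. This is precisely where the boundary degeneracy of $W_\mu$ enters, since it governs the pointwise size of $P_l(x,x)$ near $\partial\ss$ and hence the synthesis-map norm, and it is what forces the hypothesis $\rho>d/2+2\mu d(1/2-1/q)_+$, under which the boundary layer stays subdominant and the dyadic series converges with leading term at the truncation level. The case $q=\infty$, where norm concentration must be replaced by a maximal inequality carrying the correct $\sqrt{\ln n}$ loss, is the most delicate point.
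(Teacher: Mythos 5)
Your upper-bound scheme has a genuine gap that the weight $W_\mu$ makes fatal for large $q$. You approximate by a de la Vall\'ee Poussin operator $V_N$ and bound the whole tail $\sum_{l>cN}Proj_l f$ by its expected block norms, i.e.\ you allocate zero rank to every block beyond the cutoff. On the ball this cannot give the main term $n^{1/2-\rho/d}$ once $q\ge 2+1/\mu$: since $\sum_{2^j\le l<2^{j+1}}P_l(x,x)\asymp 2^{jd}\bigl(2^{-j}+\sqrt{1-\|x\|_2^2}\bigr)^{-2\mu}$, the integral $\int_{\ss}\sigma_j(x)^qW_\mu(x)\,dx$ picks up a positive power of $2^j$ from the boundary layer when $q\mu>2\mu+1$, so $\EE\|g_j\|_{q,\mu}$ is \emph{not} $\asymp 2^{j(d/2-\rho)}$ and the hypothesis on $\rho$ does not repair this (it only guarantees convergence of the resulting larger series). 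This is exactly the content of the paper's Remark~1.4: the Fourier/VP projections are asymptotically optimal only for $q<2+1/\mu$. The paper's proof instead spends nonzero rank $n_j=[u_j2^{d(1+\vz)(m-j)-1}]$ on every block $j>m$, discretizes via the cubature points of Lemma~\ref{lem4.1}, and reduces each block to the probabilistic width of the diagonal matrix $V_k=diag(\oz_i^{-1/2+1/q})$, whose largest entries (the boundary nodes) are annihilated by $D_{n_k}$; Lemma~\ref{lem4.2} then controls what remains. The same boundary effect invalidates your deviation term: the Lipschitz constant of $\xi\mapsto\|g_j\|_{q,\mu}$ is the Nikolskii constant $2^{-j\rho}2^{j(d+2\mu)(1/2-1/q)}$ for $q>2$, not $2^{-j\rho}2^{jd(1/2-1/q)}$, which again overshoots the claimed $n^{-\min\{1/2,1/q\}}$ factor unless rank is spent near the boundary.

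Your lower bound also leans on a step that is unavailable here: restricting to a single block $\mathcal{V}_L^d$ and invoking a two-sided $L_{q,\mu}$-versus-$\ell_q$ equivalence for the Gaussian coefficient vector. As the paper emphasizes, there is no equal-weight Marcinkiewicz--Zygmund system on the ball, so the sampled (or coefficient) pushforward of $\nu$ on a frequency block is not comparable to the standard Gaussian in a way that preserves the $\ell_q^m$ geometry, and ``a standard finite-dimensional lower bound'' does not apply directly. The paper's Theorem~\ref{thm4.4} circumvents this by building $N\asymp n$ disjointly supported bumps $\vr_j$ with $\supp\vr_j\subset\{\|x\|_2\le 5/6\}$ (where $W_\mu$ is benign), so that $\|F_{\bf a}\|_{q,\mu}\asymp m^{-d/q}\|{\bf a}\|_{\ell_q^N}$ exactly, and then transfers the measure with the Gaussian comparison theorem before quoting the known widths $\lz_{n,\dz}(I_N:\RR^N\to\ell_q^N,\gz_N)$ of Lemma~\ref{lem2.1}. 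To make your argument work you would need, at minimum, a replacement for the missing MZ equivalence and a rank allocation across blocks that treats the boundary nodes separately.
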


 \begin{thm}\label{thm1.2} Let $0< p<\infty,\ 1\leq q
 \le\infty$, and let $\rho$ be given as in Theorem \ref{thm1.1}.
  Then
\begin{equation}\label{1.8}\lz_n^{(a)}(W_{2,\mu}^r,\nu,L_{q,\mu})_p\asymp \bigg\{\begin{array}{ll}
n^{-\rho/d+1/2},\ \ \ \ &1\le q<\infty,\\
n^{-\rho/d+1/2}\sqrt{\ln (en)},
&q=\infty.\end{array}\end{equation}\end{thm}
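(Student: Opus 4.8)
The plan is to funnel the whole problem into one explicit Gaussian field and then match an upper and a lower bound, treating $q=\infty$ separately. Since $\nu$ is a centered Gaussian measure, the averaging exponent $p$ is irrelevant up to constants (as recalled in the introduction), so I may fix $p$ conveniently; for finite $q$ I will take $p=q$. The key preliminary step is the Karhunen--Lo\`eve representation of a $\nu$-distributed element. Because $C_\nu\phi_{lk}=\lambda_l^{-s/2}\phi_{lk}$ and $\{\lambda_l^{-r/2}\phi_{lk}\}$ is the $\langle\cdot,\cdot\rangle_r$-orthonormal eigenbasis, a random $f$ has the form $f=\sum_{l,k}\lambda_l^{-\rho/2}\xi_{lk}\phi_{lk}$ with $\xi_{lk}$ i.i.d. standard normal and $\rho=r+s/2$. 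Thus everything is governed by the single parameter $\rho$: for any fixed $x$ and any linear $L_n$ the residual $g=f-L_nf$ is a real Gaussian variable with a pointwise variance $\sigma^2(x)$, and when $L_n$ is the orthogonal projection onto $\Pi_M^d$ one simply has $\sigma^2(x)=\sum_{l>M}\lambda_l^{-\rho}P_l(x,x)$.

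The $L_{2,\mu}$ case is the backbone. Here Parseval gives $\mathbb E\|g\|_{2,\mu}^2=\sum_{l>M}\lambda_l^{-\rho}a_l^d\asymp M^{d-2\rho}$, and the optimal rank-$n$ operator is the Karhunen--Lo\`eve projection onto the $n$ largest eigendirections; choosing $M\asymp n^{1/d}$ so that $\dim\Pi_M^d\asymp n$ yields $\lambda_n^{(a)}(W_{2,\mu}^r,\nu,L_{2,\mu})_2\asymp n^{1/2-\rho/d}$ exactly. From this the lower bound for every $q\ge 2$ is immediate: on the finite-measure space $(\ss,W_\mu\,dx)$ one has $\|\cdot\|_{2,\mu}\le C\|\cdot\|_{q,\mu}$, hence $\lambda_n^{(a)}(\cdots,L_{q,\mu})_p\gg\lambda_n^{(a)}(\cdots,L_{2,\mu})_2\asymp n^{1/2-\rho/d}$. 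For the same reason the upper bound for $1\le q\le 2$ is free, since $\|\sigma\|_{q,\mu}\le\|\sigma\|_{2,\mu}$ gives $\big(\mathbb E\|g\|_{q,\mu}^q\big)^{1/q}=c_q^{1/q}\|\sigma\|_{q,\mu}\ll n^{1/2-\rho/d}$ for the partial-sum operator above.

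The remaining finite-$q$ work is the upper bound for $2<q<\infty$. I would again use the partial-sum (or a smooth de la Vall\'ee--Poussin type) operator of rank $\asymp n$, apply Fubini to get $\big(\mathbb E\|g\|_{q,\mu}^q\big)^{1/q}\asymp\|\sigma\|_{q,\mu}=\|\sigma^2\|_{q/2,\mu}^{1/2}$, and estimate this through a dyadic decomposition $\sigma^2\asymp\sum_{2^j>M}\lambda_{2^j}^{-\rho}K_j$, where $K_j(x)=\sum_{2^j\le l<2^{j+1}}P_l(x,x)$ is the diagonal of a dyadic reproducing kernel. The needed input is the sharp $L_{q/2,\mu}$-estimate for $K_j$ coming from the kernel bounds of \cite{Xu1,DX}, and the hypothesis $\rho>d/2+2\mu d(1/2-1/q)_+$ is precisely what forces the resulting geometric series to converge with sum $\asymp M^{d-2\rho}$, i.e. $\|\sigma\|_{q,\mu}\ll n^{1/2-\rho/d}$. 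For the matching lower bound when $1\le q<2$ I would restrict $\nu$ to a single dyadic block $V=\bigoplus_{M\le l<cM}\mathcal V_l^d$ with $\dim V\ge 2n$, on which (up to the constant factor $\lambda_M^{-\rho}\asymp M^{-2\rho}$) $\nu$ is a standard Gaussian in the $L_{2,\mu}$-orthonormal coordinates; a positive cubature formula exact on $\Pi_{cM}^d$ with $\asymp M^d$ nodes discretizes $\|\cdot\|_{q,\mu}$ on $V$ into an equivalent weighted $\ell_q$ norm, reducing the problem to the finite-dimensional average linear $n$-width of a Gaussian on $\mathbb R^N$ ($N\asymp M^d\ge 2n$) in an $\ell_q^N$-type norm, whose order is known and, after undoing the normalization, returns $n^{1/2-\rho/d}$.

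Finally, $q=\infty$ is where the $\sqrt{\ln(en)}$ enters and is the main obstacle, since Fubini is useless and $\mathbb E\|g\|_\infty=\mathbb E\sup_x|g(x)|$ must be controlled by a Gaussian maximal inequality. For the upper bound I would discretize $\sup_x|g(x)|$ by $\max_i|g(x_i)|$ over a net of $N\asymp n$ well-distributed points, using a Bernstein/Nikolskii inequality to pass from the net to the continuum (plus a crude tail bound on the high-degree remainder), so that $\mathbb E\max_i|g(x_i)|\ll\big(\sup_x\sigma(x)\big)\sqrt{\ln N}$. The delicate point, and the step I expect to be hardest, is that the correct scale is the \emph{bulk} value $n^{1/2-\rho/d}$ rather than $\sup_x\sigma(x)$, so a chaining/entropy (Dudley) argument adapted to the weighted metric on $\ss$ is required to avoid losing a spurious power of $M^{2\mu}$ from the boundary spikes of the kernel. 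For the lower bound I would invoke anti-concentration: on a block $V$ with $\dim V\ge 2n$ a rank-$n$ operator leaves $\gg n$ asymptotically independent Gaussian point-values of standard deviation $\asymp n^{1/2-\rho/d}$, and the expected maximum of $\gg n$ such variables is $\gg n^{1/2-\rho/d}\sqrt{\ln n}$. I therefore expect the honest control of the Gaussian supremum at $q=\infty$, together with the sharp dyadic kernel estimate for $2<q<\infty$, to be the only genuinely difficult steps; once the Karhunen--Lo\`eve representation is in hand the rest is bookkeeping.
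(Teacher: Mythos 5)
Your plan diverges completely from the paper's actual proof: the paper obtains Theorem \ref{thm1.2} as a short corollary of Theorem \ref{thm1.1}, integrating the probabilistic widths over dyadic levels $\dz=2^{-k}$ for the upper bound and using a set of measure $\ge 1/e$ on which the error exceeds $\frac12\lz_{n,1/e}$ for the lower bound. A direct Karhunen--Lo\`eve attack is in principle legitimate, and your $L_{2,\mu}$ backbone, the case $1\le q\le 2$, and the lower bounds for $q\ge 2$ via $\|\cdot\|_{2,\mu}\ll\|\cdot\|_{q,\mu}$ are all sound. But there is a genuine gap at the heart of the remaining cases: the Fourier partial-sum (or de la Vall\'ee--Poussin) operator does \emph{not} achieve the rate $n^{1/2-\rho/d}$ for $q\ge 2+1/\mu$ when $\mu>0$, and the paper says so explicitly (Remark 1.4, and the introduction notes that \cite{WZ} could only reach $q<2+1/\mu$ by this route). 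Concretely, $P_l(x,x)\asymp l^{d-1}\bigl(\sqrt{1-\|x\|_2^2}+l^{-1}\bigr)^{-2\mu}$, so for the residual of $S_M$ one has $\sup_x\sigma(x)\asymp M^{d/2-\rho+\mu}$ (attained near the boundary), and the boundary shell $\{1-\|x\|_2^2\le M^{-2}\}$ has $W_\mu$-measure $\asymp M^{-2\mu-1}$; carrying out your own dyadic computation gives $\|\sigma\|_{q,\mu}\asymp M^{d/2-\rho}\cdot M^{\mu-(2\mu+1)/q}$ for $q>2+1/\mu$, with an extra factor that is a positive power of $M$. The hypothesis $\rho>d/2+2\mu d(1/2-1/q)_+$ makes the series converge but does not remove this factor, contrary to what you assert.

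The situation is worse at $q=\infty$: since $\EE\|g\|_\infty\ge\sqrt{2/\pi}\,\sup_x\sigma(x)\asymp M^{d/2-\rho+\mu}$ for the partial-sum residual, no chaining or entropy argument can recover the target $n^{1/2-\rho/d}\sqrt{\ln(en)}\asymp M^{d/2-\rho}\sqrt{\ln M}$ when $\mu>0$; the operator itself must be replaced, not the supremum estimate refined. This is precisely the difficulty the paper's machinery is built to overcome: the near-optimal operator is assembled blockwise from the discretization $T_kR_k^{-1}L_kS_kU_k$ of Theorem \ref{thm4.3}, where the diagonal matrices $V_k=\mathrm{diag}(\oz_i^{-1/2+1/q})$ encode the cubature weights $\oz_i\asymp 2^{-kd}(2^{-k}+\sqrt{1-\|\xi_i\|_2^2})^{2\mu}$ and the rank-reduction $L_k$ (via Lemma \ref{lem2.2} and Lemma \ref{lem4.2}) spends the rank budget preferentially near the boundary. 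Your lower-bound sketches for $1\le q<2$ and $q=\infty$ are directionally right but also nontrivial (the ``$\gg n$ asymptotically independent surviving coordinates'' step is essentially the Maiorov--Wasilkowski bound \eqref{2.3}, which the paper imports rather than reproves); the paper realizes them with bump functions supported in $\{\|x\|_2\le 5/6\}$ exactly so that the weight plays no role there.
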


\

Theorem \ref{thm1.2} extends (2.17) in \cite{WZ} which gave the
orders of  $\lz_n^{(a)}(W_{2,\mu}^r,\nu,L_{q,\mu})_p$ for $1\le q<
2+1/\mu$ and $0<p<\infty$. The proof of Theorem \ref{thm1.2} is
different from the one in \cite{WZ}. We use  Theorem \ref{thm1.1} to
prove Theorem \ref{thm1.2}. In order to prove Theorem \ref{thm1.1},
we also use the discretization method as in the previous works
concerning probabilistic widths. However, unlike the periodic case and the
Wiener measure case, we cannot find  an orthogonal transform between
a finite-dimensional function space and a sequence space of function
values  with comparable $L_q$ and $\ell_q$ norm, so we cannot use the invariant properties  of the standard Gaussian measure under
action of an orthogonal transform to give the
discretization theorems. Instead, we use the
comparison theorems for Gaussian measures. Unlike the spherical case, we cannot find a
equivalent-weight Marcinkiewicz-Zygmund (MZ) inequalities on the
ball (see Remark 3 in \cite{WH}), so we cannot use  known results on  the probabilistic
linear widths  of the identity matrix
on $\Bbb R^m$. Instead, we need to find out and use the probabilistic
linear widths of diagonal matrixes on $\Bbb R^m$.

\begin{rem}\ \ Let $BH(\nu)$ be the unit ball of  the Cameron-Martin space  of the Gaussian measure $\nu$.
 Then $BH(\nu)=BW_{2,\mu}^\rho$. It follows from \cite{WH}  that the classical
Kolmogorov and linear widths of $BH(\nu)$ in $L_{q,\mu}$ have the
same error order for $1\le q\le 2$, however, for $q>2$, the
 Kolmogorov   width $d_n(BH(\nu),L_{q,\mu})$ is
essentially less than the linear width $d_n(BH(\nu),L_{q,\mu})$, and
optimal linear operators lose to optimal nonlinear operators by a
factor $cn^{1/2-1/q}$ in the worst case setting. From
\cite[(2.16)]{WZ} and \eqref{1.8}, we know that in the average case
setting, the Kolmogorov and linear widths of $W_{2,\mu}^r$ in
$L_{q,\mu}$ have the same error order for $1\le q<\infty$. This
means that for ``most" functions in $W_{2,\mu}^r$, asymptotic
optimal linear operators are (modulo a constant) as good as optimal
nonlinear operators in the $L_{q,\mu} \ (1\le q<\infty)$
metric.\end{rem}

\begin{rem}\ \ Comparing \eqref{1.8} with (2.15) in \cite{WH},  we obtain that  in the average case setting,
 the Fourier partial summation operators $S_n$ are the asymptotically optimal linear operators in the
 $L_{q,\mu}$ metric if and only if  $1\le q<2+1/\mu$. This is completely different from
  the one-dimensional periodic case  and the spherical
  case, where the Fourier partial summation operators  are the asymptotically optimal linear operators in the
 $L_{q}$ metric for all $q,\ 1\le q\le \infty$ (see \cite{WZZ2}, \cite{WZZ1}, and \cite{Wh}).
\end{rem}

The paper is organized as follows. Section 2 contains some lemmas
concerning  probabilistic linear widths of diagonal matrixes in
$\Bbb R^m$ with the standard Gaussian measure. In Section 3,  we
give discretization theorems of estimates of probabilistic linear
widths. Finally, based on the results obtained in Sections 3 and 2,
we prove Theorems \ref{thm1.1} and \ref{thm1.2} in Section 4.

\section{Probabilistic linear widths of diagonal matrixes on $\Bbb R^m$ }\

Let $\ell_{q}^m\ (1\le q\le\infty)$ denote the $m$-dimensional
normed space of vectors  $x=(x_1,\dots,x_m)\in\Bbb R^m$, with the
$\ell_{q}^m$ norm
$$\|x\|_{\ell_{q}^m}:=\bigg\{\begin{array}{ll}\big(\sum_{i=1}^m
|x_i|^q \big)^{\frac 1q},\ \ \ &1\le q<\infty,\\ \max_{1\le i\le m}
|x_i|, &q=\infty.\end{array}$$ As usual, we identify $\RR^m$ with
the space $\ell_2^m$,  use the notation $\lb x, y \rb$ to denote the
Euclidean inner product of $x, y\in\RR^m$, and write $\|\cdot\|_2$
instead of $ \|\cdot\|_{\ell_{2}^m}$.  Consider in $\Bbb R^m$ the
standard Gaussian measure $\gz_m$, which is given by
$$\gz_m(G)=(2\pi)^{-m/2}\int_G\exp(-\|x\|_2^2/2)\,dx,$$where $G$ is
any Borel subset in $\Bbb R^m$.

Let $1\le q\le \infty$, $1\le n<m$, and $\delta\in [0,1)$. Then the
probabilistic linear $(n,\dz)$-widths of  a linear mapping
$T:\RR^m\to \ell_q^m$  is defined by
$$\lz_{n,\dz}(T:\RR^m\to
\ell_q^m,\gz_m)=\inf_{G_\dz}\inf_{T_n}\sup_{x\in\Bbb
 R^m\setminus G_\dz}\|Tx-T_nx\|_{\ell_{q}^m},$$  where
 $G_\dz$ runs over all possible Borel subset of $
 \Bbb R^m$ with measure $\gz_m(G_\dz)\le \dz$,  and $T_n$  all linear operators from $\Bbb R^m$ to $\ell_q^m$ with rank
 at most
 $n$.

 There are a few results devoted to studying
  the  probabilistic widths of a linear mapping $T:\RR^m\to \ell_q^m$ (see \cite{CNL, DW, FY1, M3, M4,
  MW}). Throughout  the section, we use  the letter $D$ to
denote an   $m\times m$ real diagonal matrix $diag(d_1,\dots,d_m)$
with $d_1\ge d_2\ge\dots\ge d_m>0$, the letter   $D_n$ the diagonal
matrix $diag(d_1,\dots,d_n, 0,\dots,0)$ for $1\leq n\leq m$, and the
letter $I_m$ the $m$ by $m$ identity matrix. Moreover,
$\{e_1,\cdots, e_m\}$ denotes the standard orthonormal basis in
$\RR^m$:
 $$e_1=(1,0,\cdots, 0),\cdots, e_m=(0,\cdots, 0,1).$$
 The following lemmas will be used in the proof of Theorem
  \ref{thm1.1}.

  \begin{lem}\label{lem2.1} (1) {\rm (\cite{FY1}).} If $1\le q\le 2, \ m\ge 2n,  \delta\in
(0,1/2]$, then \begin{equation}\label{2.1}\lz_{n,\dz}(I_m:\RR^m\to
\ell_q^m,\gz_m)\asymp
m^{1/q}+m^{1/q-1/2}\sqrt{\ln(1/\dz)}.\end{equation}

 (2) {\rm (\cite{M4}).} If $2\le q<\infty,\  m\ge
2n,\ \delta\in (0,1/2]$, then
\begin{equation}\label{2.2}\lz_{n,\dz}(I_m:\RR^m\to
\ell_q^m,\gz_m)\asymp m^{1/q}+\sqrt{\ln(1/\dz)}.\end{equation}

(3) {\rm (\cite{MW}).} If $q=\infty,\  m\ge 2n,\ \delta\in (0,1/2]$,
then
\begin{equation}\label{2.3}\lz_{n,\dz}(I_m:\RR^m\to
\ell_q^m,\gz_m)\asymp \sqrt{\ln((m-n)/\dz)}\asymp \sqrt{\ln
m+\ln(1/\dz)}.\end{equation}\end{lem}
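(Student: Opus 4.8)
These three estimates are proved in the cited papers; the common strategy I would follow splits into a concentration argument for the upper bounds and a quantile argument for the lower bounds. For the \emph{upper bounds} I would take $T_n$ to be the coordinate projection $P_nx=(x_1,\dots,x_n,0,\dots,0)$, so that $\|I_mx-P_nx\|_{\ell_q^m}=\|(x_{n+1},\dots,x_m)\|_{\ell_q^{m-n}}$, and study the single random variable $f(x)=\|(x_{n+1},\dots,x_m)\|_{\ell_q^{m-n}}$ under $\gz_m$. Two inputs suffice: its mean, which satisfies $\EE f\asymp(m-n)^{1/q}\asymp m^{1/q}$ for $1\le q<\infty$ (comparing $\EE f$ with $(\EE f^q)^{1/q}$ via Jensen and concentration) and $\EE f\asymp\sqrt{\ln(m-n)}$ for $q=\infty$; and its Lipschitz constant with respect to the Euclidean norm, namely $L_q=1$ for $q\ge2$ and $L_q=(m-n)^{1/q-1/2}\asymp m^{1/q-1/2}$ for $1\le q<2$. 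The Gaussian concentration inequality then yields a set $G_\dz$ with $\gz_m(G_\dz)\le\dz$ off which $f\le \EE f+L_q\sqrt{2\ln(1/\dz)}$, and substituting the three pairs $(\EE f,L_q)$ reproduces the right-hand sides of \eqref{2.1}, \eqref{2.2}, \eqref{2.3} (using $m\ge2n$, so $m-n\asymp m$, and $\sqrt a+\sqrt b\asymp\sqrt{a+b}$ for $q=\infty$).

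For the \emph{lower bounds} I would use a quantile principle: since any admissible set has $\gz_m(G_\dz)\le\dz$, it is enough to exhibit, for every rank-$n$ operator $T_n$, a threshold $t$ of the claimed order with $\gz_m(A_t)>\dz$, where $A_t=\{x:\ \|(I_m-T_n)x\|_{\ell_q^m}>t\}$; then $A_t\not\subseteq G_\dz$, and the supremum over $\RR^m\setminus G_\dz$ is at least $t$. To bound $\gz_m(A_t)$ from below I would pass to $V=\ker T_n$, which has $\dim V\ge m-n\ge m/2$ and on which $I_m-T_n$ acts as the identity. Splitting $X=\Pi_VX+\Pi_{V^\perp}X$ and conditioning on the second, independent summand $c=(I_m-T_n)\Pi_{V^\perp}X$, the symmetrization inequality $\|U+c\|_{\ell_q^m}+\|{-}U+c\|_{\ell_q^m}\ge2\|U\|_{\ell_q^m}$ with $U=\Pi_VX$ shows $\gz_m(A_t)\ge\frac12\,\gz_m(\|U\|_{\ell_q^m}>t)$. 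This reduces everything to lower-tail estimates for the $\ell_q^m$-norm of the standard Gaussian $U$ on the $(\dim V)$-dimensional subspace $V$.

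The main obstacle is exactly this last step, because for $q\ne2$ the subspace $V$ is \emph{not} coordinate-aligned and $\|\cdot\|_{\ell_q^m}$ is not rotation invariant. Writing $\sigma_i^2=\|\Pi_Ve_i\|_2^2$, one has $\sum_i\sigma_i^2=\dim V\ge m/2$ with each $\sigma_i\le1$, and a power-mean inequality gives $\EE\|U\|_{\ell_q^m}^q\asymp m$ for $1\le q<\infty$, so $\|U\|_{\ell_q^m}$ has a median of order $m^{1/q}$ and the ``mean-sized'' term is captured with constant probability; for $q=\infty$ at least $m/4$ of the $\sigma_i$ exceed $\frac12$, which gives a maximum of order $\sqrt{\ln m}$ with constant probability. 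The delicate part is the probabilistic term: for $q\ge2$ a single coordinate with $\sigma_{i_0}\gtrsim1$ already forces $\|U\|_{\ell_q^m}\ge|U_{i_0}|\gtrsim\sqrt{\ln(1/\dz)}$ on an event of probability $\gtrsim\dz^{c}$ with $c<1$, which exceeds $2\dz$ for small $\dz$ and yields \eqref{2.2}, and the analogous one-coordinate idea combined with the maximum-of-Gaussians tail handles \eqref{2.3}. For $1\le q<2$, however, recovering the prefactor $m^{1/q-1/2}$ in \eqref{2.1} requires the \emph{simultaneous} largeness of order-$m$ coordinates of $U$, i.e. control of a joint lower tail $\gz_m\big(\bigcap_i\{|U_i|\ge a\}\big)$ of a correlated Gaussian vector rather than of a single marginal. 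This coupling between the non-coordinate geometry of $V$ and the $\ell_q$ anisotropy is where I expect the real work to lie, and where I would either invoke a Gaussian comparison/correlation inequality or follow the explicit constructions of \cite{FY1} and \cite{MW}.
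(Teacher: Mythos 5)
This lemma is quoted in the paper without proof: the three estimates are imported verbatim from \cite{FY1}, \cite{M4} and \cite{MW}, and the paper's role for them is purely as input to Theorems \ref{thm1.1} and \ref{thm4.4}. So there is no in-paper argument to compare yours against; what can be judged is whether your sketch would actually reconstruct the cited results.

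Your upper-bound half is sound and is essentially the standard argument (it is also the same mechanism the paper itself uses in Lemma \ref{lem2.2}): coordinate projection, the moment identity $\EE\|P_{>n}X\|_{\ell_q^m}\le (\EE\|P_{>n}X\|_{\ell_q^m}^q)^{1/q}\asymp m^{1/q}$ (resp.\ $\asymp\sqrt{\ln m}$ for $q=\infty$), the Lipschitz constants $1$ for $q\ge 2$ and $m^{1/q-1/2}$ for $q\le 2$, and the Gaussian concentration inequality. The lower-bound half, however, contains a genuine gap which you yourself flag but do not close. Two concrete points. First, the symmetrization step only yields $\gz_m(A_t)\ge\frac12\,\gz_m(\|\Pi_V X\|_{\ell_q^m}>t)$, so you need the latter probability to exceed $2\dz$; for $\dz$ near $1/2$ this forces you to replace the quantile argument by concentration of $\|\Pi_VX\|_{\ell_q^m}$ about its median, which works but is not what you wrote. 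Second, and more seriously, for $1\le q<2$ the term $m^{1/q-1/2}\sqrt{\ln(1/\dz)}$ genuinely requires a joint lower tail for order-$m$ correlated coordinates of $\Pi_VX$ (the elementary inequalities go the wrong way: $\|x\|_{\ell_q^m}\le m^{1/q-1/2}\|x\|_{\ell_2^m}$ is an upper bound, and the single-coordinate trick only gives the $q\ge2$ rate $\sqrt{\ln(1/\dz)}$ without the factor $m^{1/q-1/2}$). Deferring this to ``a Gaussian comparison/correlation inequality or the explicit constructions of \cite{FY1}'' is exactly where the proof of \eqref{2.1} lives, so as it stands the proposal establishes the upper bounds and the easy parts of the lower bounds, but not part (1) in full. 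Since the paper treats the lemma as a citation, this is an acceptable state of affairs for reading the paper, but not a proof.
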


\begin{lem} \label{lem2.2} Assume that $$\sum_{i=1}^md_i^\beta\le
C(m,\beta) \ \ \ {\rm for \ \ some}\ \ \beta>0.$$ Then for  $2\le
q\le\infty,\ m\ge 2n,\ \delta\in (0,1/2]$, we have
\begin{equation}\label{2.4}\lz_{n,\dz}(D:\RR^m\to \ell_q^m,\gz_m)\ll \Big(\frac {C(m,\beta)}{n+1}\Big)^{\frac
1\beta}\cdot \Bigg\{\begin{array}{ll}(m^{1/q}+\sqrt{\ln(1/\dz)} ,
&2\le q<\infty, \\ \\ \sqrt{\ln m+\ln(1/\dz)}, &q=\infty.\end{array}
\end{equation}\end{lem}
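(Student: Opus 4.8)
The plan is to reduce the diagonal matrix $D$ to the identity matrix already handled in Lemma~\ref{lem2.1}, using the monotonicity $d_1\ge\dots\ge d_m$ in an essential way. As the rank-$n$ approximant I would take the truncation $D_n=\mathrm{diag}(d_1,\dots,d_n,0,\dots,0)$ defined above, which has rank $n$ and reproduces $D$ exactly on the first $n$ coordinates. For any $x\in\RR^m$ the residual $Dx-D_nx$ is supported on the last $m-n$ coordinates, and since $d_i\le d_{n+1}$ for every $i\ge n+1$,
\begin{equation*}
\|Dx-D_nx\|_{\ell_q^m}=\Big(\sum_{i=n+1}^m (d_i|x_i|)^q\Big)^{1/q}\le d_{n+1}\,\|x\|_{\ell_q^m},
\end{equation*}
with the obvious modification $\max_{i\ge n+1}d_i|x_i|\le d_{n+1}\|x\|_{\ell_\infty^m}$ for $q=\infty$. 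Thus the whole problem factors through the single scalar $d_{n+1}$ multiplying the $\ell_q^m$-norm of a standard Gaussian vector, and crucially I may keep the \emph{same} Borel set $G_\delta\subset\RR^m$ for both $D$ and the identity, so no product-measure bookkeeping is needed.

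Next I would estimate $d_{n+1}$ from the summability hypothesis. Because $d_1\ge\dots\ge d_{n+1}$ we have $(n+1)d_{n+1}^\beta\le\sum_{i=1}^{n+1}d_i^\beta\le C(m,\beta)$, whence
\begin{equation*}
d_{n+1}\le\Big(\frac{C(m,\beta)}{n+1}\Big)^{1/\beta}.
\end{equation*}
This is precisely the prefactor appearing in \eqref{2.4}, so all that remains is to control the Gaussian average of $\|x\|_{\ell_q^m}$.

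For the final step, taking the infimum of $\sup_{x\notin G_\delta}d_{n+1}\|x\|_{\ell_q^m}$ over all Borel sets $G_\delta$ with $\gamma_m(G_\delta)\le\delta$ gives exactly $d_{n+1}\,\lambda_{0,\delta}(I_m:\RR^m\to\ell_q^m,\gamma_m)$, because a rank-$0$ operator is the zero map and hence $\|I_mx-T_0x\|_{\ell_q^m}=\|x\|_{\ell_q^m}$. Combining this with the bound on $d_{n+1}$ and invoking Lemma~\ref{lem2.1}(2),(3) at $n=0$ — which yield $\lambda_{0,\delta}(I_m)\asymp m^{1/q}+\sqrt{\ln(1/\delta)}$ for $2\le q<\infty$ and $\lambda_{0,\delta}(I_m)\asymp\sqrt{\ln m+\ln(1/\delta)}$ for $q=\infty$ — produces \eqref{2.4} directly, with $m\ge 2n$ ensuring $n+1\le m$ so that $d_{n+1}$ is defined.

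The only point requiring care is the legitimacy of applying Lemma~\ref{lem2.1} at rank $0$: its hypothesis $m\ge 2n$ becomes vacuous there, and since $\lambda_{0,\delta}$ is the largest of the widths one should confirm that the stated upper bounds still hold. This is harmless because the right-hand sides of \eqref{2.2} and \eqref{2.3} do not improve as $n$ decreases; alternatively the required upper estimate for $\lambda_{0,\delta}(I_m)$ — the smallest radius of an $\ell_q^m$-ball carrying Gaussian mass $1-\delta$ — can be obtained directly from a Gaussian tail bound together with a union bound over the $m$ coordinates in the $q=\infty$ case. I expect this verification, rather than the reduction itself, to be the only mildly technical part; the essential mechanism is simply that the monotonicity of the $d_i$ collapses the diagonal width into a scalar multiple of the identity width.
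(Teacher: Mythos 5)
Your proposal is correct, and it shares with the paper's proof its two load-bearing steps: the truncation $D_n$ as the rank-$n$ approximant, and the pigeonhole bound $(n+1)d_{n+1}^\beta\le\sum_{i=1}^{n+1}d_i^\beta\le C(m,\beta)$, i.e. exactly \eqref{2.7}. Where you genuinely diverge is the final Gaussian estimate. The paper keeps the functional $F(x)=\|Dx-D_nx\|_{\ell_q^m}$ intact: it bounds $\EE F(X)$ via the exact moment identity \eqref{2.6} from \cite{DW} (with a Stirling argument and the choice $q_1=\ln(e^2m)$ to extract $\sqrt{\ln m}$ when $q=\infty$), notes that $F$ is $d_{n+1}$-Lipschitz with respect to $\ell_2$ because $q\ge2$, and applies the Gaussian concentration inequality \eqref{2.9} to produce the $\sqrt{\ln(1/\dz)}$ term. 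You instead factor out $d_{n+1}$ pointwise and outsource everything to $\lz_{0,\dz}(I_m:\RR^m\to\ell_q^m,\gz_m)$. That is a legitimate and slightly more modular shortcut (it avoids \eqref{2.6} and the Stirling step entirely), but it carries two small debts you should discharge explicitly. First, the width in Section 2 is defined only for $1\le n<m$ and the results \eqref{2.2}, \eqref{2.3} are cited for $n\ge1$, so the rank-zero upper bound is not literally available off the shelf. Second, your fallback "Gaussian tail bound plus union bound over coordinates" only handles $q=\infty$; for $2\le q<\infty$ a Markov bound on $\|x\|_{\ell_q^m}^q$ would give $(m/\dz)^{1/q}$ rather than $m^{1/q}+\sqrt{\ln(1/\dz)}$, so you genuinely need concentration of the norm, namely \eqref{2.9} applied to the $1$-Lipschitz function $x\mapsto\|x\|_{\ell_q^m}$ together with $\EE\|X\|_{\ell_q^m}\ll m^{1/q}$ (resp. $\ll\sqrt{\ln m}$). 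Once that is done, your argument and the paper's are the same concentration proof with the Lipschitz constant $d_{n+1}$ inserted at a different point.
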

\begin{proof} First we show that \begin{equation}\label{2.5}\int_{\Bbb
R^m}\|Dx-D_nx\|_{\ell_q^m}\,\gamma_m(dx)\ll  \Big(\frac
{C(m,\beta)}{n+1}\Big)^{\frac 1\beta}\cdot
\Bigg\{\begin{array}{ll}m^{1/q} , &2\le q<\infty,
\\ \\ \sqrt{\ln m}, &q=\infty.\end{array}\end{equation}Indeed, for $2\le q<\infty$, it follows from
\cite[(2.9)]{DW} that
\begin{equation}\label{2.6}\(\int_{\Bbb
R^m}\|Dx-D_nx\|_{\ell_q^m}^q\,\gamma_m(dx)\)^{1/q}=C(q)\(\sum_{k=n+1}^md_k^q\)^{1/q},
\end{equation}where
$C(q)=\Big(\pi^{-\frac{1}{2}}2^\frac{q}{2}\Gamma(\frac{q+1}{2})\Big)^{1/q}$.
Since
$${(n+1)}d_{n+1}^\beta\le \sum_{i=1}^{n+1}d_i^\beta\le \sum_{i=1}^md_i^\beta\le
C(m,\beta),$$we get
\begin{equation}\label{2.7}d_{n+1}\le \Big(\frac {C(m,\beta)}{n+1}\Big)^{\frac
1\beta}.\end{equation} Hence, we have\begin{align*}\int_{\Bbb
R^m}\|Dx-D_nx\|_{\ell_q^m}\,\gamma_m(dx)&\le \(\int_{\Bbb
R^m}\|Dx-D_nx\|_{\ell_q^m}^q\,\gamma_m(dx)\)^{1/q}\ll
\(\sum_{k=n+1}^md_k^q\)^{1/q}\\ & \le d_{n+1}(m-n)^{1/q}\ll m^{1/q}
\Big(\frac {C(m,\beta)}{n+1}\Big)^{\frac 1\beta},
\end{align*}proving \eqref{2.5} for $2\le q<\infty$. For $q=\infty$,
it follows from \eqref{2.6} and \eqref{2.7}  that for any
$1<q_1<\infty$,
\begin{align} \int_{\R^m}\|Dx-D_nx\|_{\ell_\infty^m}
\gz_m(dx)&\le
\Big(\int_{\R^m}\|Dx-Dx\|_{\ell_{q_1}^m}^{q_1}\gz_m(dx)\Big)^{1/q_1}\notag\\
&=C(q_1)\(\sum_{k=n+1}^md_k^{q_1}\)^{1/q_1}\le
C(q_1)d_{n+1}(m-n)^{1/q_1}\notag\\ &\le
c\,m^{\frac{1}{q_1}}\Big(\Gamma\big(\frac{q_1
+1}{2}\big)\Big)^{\frac{1}{q_1}} \Big(\frac
{C(m,\beta)}{n+1}\Big)^{\frac 1\beta}.\label{2.8}\end{align} By
Stirling's formula (see \cite[p. 18]{AAR}):
$$\lim_{x\to+\infty}\frac {\Gamma(x)}{\sqrt{2\pi}x^{x-\frac{1}{2}}\exp(-x)}
=1,$$ we  obtain
$$(\Gamma(\frac{x
+1}{2}))^\frac{1}{x}\leq
c\,(\sqrt{2\pi\,})^\frac{1}{x}(\frac{x+1}{2})
^\frac{1}{2}\exp(-\frac{x+1}{2x})\le c\,x^\frac{1}{2}.$$ Hence,
taking $q_1=\ln (e^2m)$, we obtain from (2.8) that
\begin{equation*}
\int_{\R^m}\|Dx-D_nx\|_{\ell_\infty^m} \gz_m(dx)\le
c\,m^{\frac{1}{q_1}}q_1^{\frac 12} \Big(\frac
{C(m,\beta)}{n+1}\Big)^{\frac 1\beta}\ll \sqrt{\ln m}\Big(\frac
{C(m,\beta)}{n+1}\Big)^{\frac 1\beta},
\end{equation*}
which completes the proof of \eqref{2.5}.

Now we shall show \eqref{2.4}. We need the following lemma.

\begin{lem}\label{lem3.3} (\cite[(1.7.7)]{Bo}, \cite[ p. 47]{Pis}) \
\ Let  $F:\R^m\to\R$ be a function  satisfying the following
Lipschitz condition
$$|F(x)-F(y)|\le \sz\|x-y\|_2,\ \ x,y\in\R^m,$$
for some  $\s>0$ independent of $x$ and $y$. If $X\sim N_m(0,
I_m)_{\RR^m}$ is an $\RR^m$-valued Gaussian random vector with mean
$0$ and covariance matrix $I_m$, then for all $t>0$
\begin{equation}\label{2.9}\PP(|F(X)-\EE F(X)|\ge
t)\le 2\exp(-\frac{t^2}{K^2\sz^2}),\end{equation} with $K>0$ being
an absolute constant. \eqref{2.9} is called the Gaussian
concentration inequality. \end{lem}

We continue to prove Lemma \ref{lem2.2}. Let
$F(x)=\|Dx-D_nx\|_{\ell_{q}^m}$ for $x\in\RR^m$ and $2\leq q \leq
\infty$. By \eqref{2.7} we obtain for any $x,y\in \RR^m$
\begin{align*}|F(x)-F(y)|&\le \|(D-D_n)(x-y)\|_{\ell_{q}^m}=\Big(\sum_{k=n+1}^m d_k^q|x_k-y_k|^q\Big)^{\f1q}\\
&\leq d_{n+1}\Big(\sum_{k=n+1}^m |x_k-y_k|^q\Big)^{\f1q} \le
d_{n+1}\Big(\sum_{k=n+1}^m |x_k-y_k|^2\Big)^{\f12}\\& \le \Big(\frac
{C(m,\beta)}{n+1}\Big)^{\frac 1\beta} \|x-y\|_2=:\sz_1 \|x-y\|_2,
\end{align*}
 where $\sa_1:=\Big(\frac
{C(m,\beta)}{n+1}\Big)^{\frac 1\beta}$. Thus,
 applying the Gaussian
concentration inequality \eqref{2.9} yields
\begin{equation*}\PP(|F(X)-\mathbb{E}F(X)|\ge
t)\le 2\exp(-\frac{t^2}{K^2\sz_1^2}),\   \  \forall
t>0,\end{equation*} where, here and in what follows, $X\sim
N_m(0,I_m)_{\RR^m}$. In particular, this implies that for
$Q_\delta=\big\{x\in \R^m:\
F(x)>\mathbb{E}F(X)+K\sigma_1\sqrt{\ln(2/\delta)}\big\} $ with
$\d\in (0,1)$,
$$\gamma_m(Q_\dz)\le \PP(|F(X)-\mathbb{E}F(X)\big|>K\sigma_1\sqrt{\ln(2/\delta)})\le \dz. $$
By the definition of the linear $(n,\d)$ widths, this last equation
further implies that
\begin{align}\lz_{n,\dz}&(D:\RR^m\to\ell_q^m,\gamma_m)\le \sup_{\R^m\setminus
Q_\dz}\|Dx-D_nx\|_q\notag\\
&\le
\mathbb{E}F(X)+K\sigma_1\sqrt{\ln(2/\delta)}.\label{2.10}\end{align}
 On the other hand, however, using \eqref{2.5}, we have
\begin{equation}\label{2.11}\mathbb{E}F(X)=\EE \|DX-D_nX\|_{\ell_q^m}=\int_{\Bbb
R^m}\|Dx-D_nx\|_{\ell_q^m}\,\gamma_m(dx)=:\s_2,\end{equation} where
$\s_2$ denotes the right expression of \eqref{2.5}. Thus, combining
\eqref{2.10} with \eqref{2.11}, we deduce the desired upper
estimates.

\end{proof}

\section{Discretization of the probabilistic linear widths}

This  section  is devoted to obtaining the discretization theorems
which give the reduction of the calculation of  probabilistic widths
of a given function class to the computation of probabilistic widths
of a finite-dimensional set equipped with the standard Gaussian
measure.

Let $\eta$ be a nonnegative $C^\infty$-function on $[0,+\infty)$
supported in $[0,2]$ and  equal to 1 on $[0,1]$. We also suppose
that $\eta(x)>0$ for $x\in [0,2)$. We define
\begin{equation}\label{3.1}L_{n,\eta}(x,y):=\sum_{j=0}^{\infty}\eta (\frac j n)P_j(x,y), \ \
 \ \ x,y\in \ss,\end{equation}where
 $P_k(x,y)$ is given in \eqref{1.4}. It is well known that for any $P\in\Pi_n^d$,
 \begin{equation}\label{3.1.1}\int_{B^d}L_{n,\eta}(x,y)P(y)W_\mu(y)dy=P(x),\ \ \ \ x\in
 \ss.\end{equation}
 We recall that the operator $S_n$ is the
 orthogonal projection operator from $L_{2,\mu}$ to $\Pi_n^d$,
 i.e., $$S_nf(x)=\sum_{k=0}^n
 Proj_k(f)(x)=\sum_{k=0}^n\int_{\ss}f(y)P_k(x,y)W_\mu(y)dy,$$where $Proj_k$ is the
orthogonal  projector from $L_{2,\mu}$ onto $\mathcal{V}_k^d$
defined by \eqref{1.3}. For any $f\in L_{2,\mu}$, we define
\begin{equation}\label{3.2}\dz_1(f)=S_{2}(f),\ \
\dz_k(f)=S_{2^{k}}(f)-S_{2^{k-1}}(f) \ \ {\rm for }\ \
k=2,3\dots.\end{equation}
 Then
for $x\in \ss$,
$$\dz_k(f)(x)=\sum_{j=2^{k-1}+1}^{2^k}Proj_j(f)(x)=\lb f, M_k(\cdot,x)\rb,$$
where
\begin{equation}\label{3.3}M_k(x,y)=\sum_{j=2^{k-1}+1}^{2^k}\sum_{i=1}^{a^d_j}\phi_{ji}(x)\phi_{ji}(y)
=\sum_{j=2^{k-1}+1}^{2^k}P_k(x,y) \end{equation} is the reproducing
kernel of the Hilbert space
$L_{2,\mu}\bigcap\big(\bigoplus\limits_{j=2^{k-1}+1}^{2^k}\mathcal{V}_j^d\big)$,
and $\{\phi_{j1},\dots, \phi_{ja_j^d}\}$ is an orthonormal basis for
$\mathcal{V}_j^d$. Obviously, for any $x\in \ss$,  $M_k(\cdot,x)\in
\bigoplus\limits_{j=2^{k-1}+1}^{2^k}\mathcal{V}_j^d$  and for any
$f\in \bigoplus\limits_{j=2^{k-1}+1}^{2^k}\mathcal{V}_j^d$,
$f(x)=\dz_k(f)(x)=\lb f, M_k(\cdot,x)\rb.$ In special, for any
$x,y\in\ss$,
\begin{equation*}\label{3.4}\lb
M_k(\cdot,x),M_k(\cdot,y)\rb=M_k(x,y).\end{equation*}

 Now we  introduce
the  metric $\tilde\rho$  on $\ss$:
$$ \tilde\rho(x,y):= \arccos \Big(
(x,y)+\sqrt{1-\|x\|_2^2}\sqrt{1-\|y\|_2^2}\Big), \ \ \ \ \
x,y\in\ss.$$For $r>0,\ x\in \ss$, we set
$B_{\tilde\rho}(x,r):=\{y\in \ss\ | \ \ \tilde\rho(x,y)\le r\}$.
 For $\delta>0,\ n\in \Bbb N$, we say that a
finite subset $\Lz\subset \ss$ is maximal $(\frac \delta n,
\tilde\rho)$-separated if
$$ \ss\subset \bigcup_{y\in \Lz}B_{\tilde\rho}(y,\frac \delta n)\ \ \ {\rm
and}\ \ \ \min_{y\neq y'\in \Lz}\tilde\rho(y,y')\ge \frac \delta
n.$$ The following cubature formulae and Marcinkiewicz-Zygmund (MZ)
inequalities  on  $\ss$  are crucial for establishing discretization
theorems.

\begin{lem}\label{lem4.1} (see \cite{Dai1, PX}) There exists a constant
$\gamma
>0$ depending only on $d$  and $\mu$ such that  for any $\dz\in
(0,\gamma]$, any positive integer $n$, and any maximal
$(\dz/n,\tilde\rho)$-separated subset $\Lz\subset \ss$, there exists
a sequence of positive numbers\\ $\omega_\xi\asymp
n^{-d}(\frac1n+\sqrt{1-\|\xi\|_2^2})^{2\mu}, \ \xi\in\Lz$, for which
the following quadrature formula holds for all $f \in \Pi_{4n}^d$,
$$\int_{\ss} f(x)W_\mu(x)\,dx=\sum_{\xi\in \Lz}\omega_\xi f(\xi).$$
Moreover,  for any $1\le q\le\iz,\ f\in\Pi_{n}^d$, we have
$$\|f\|_{q,\mu}\asymp
\bigg\{\begin{array}{ll}\big(\sum_{\xi\in \Lz}
 |f(\xi)|^q \,\omega_\xi\big)^{\frac 1q},\ \ \ &1\le q<\infty,\\
\max_{\xi\in \Lz} |f(\xi)|, &q=\infty,\end{array} $$ where the
constants of equivalence depend only on $d$ and $\mu$.\end{lem}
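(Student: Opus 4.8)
The plan is to regard $(\ss,\tilde\rho,W_\mu(x)\,dx)$ as a space of homogeneous type, which is the natural setting for discretization results of this kind; the metric $\tilde\rho$ is precisely the geodesic distance pulled back by the lift $x\mapsto(x,\sqrt{1-\|x\|_2^2})$, so it is the correct substitute for the Euclidean distance. First I would record the volume estimate
$$\int_{B_{\tilde\rho}(x,r)}W_\mu(y)\,dy\asymp r^d\big(\sqrt{1-\|x\|_2^2}+r\big)^{2\mu},\qquad 0<r\le\pi,$$
which shows that $W_\mu(x)\,dx$ is a doubling measure and that the target weights are the measures of balls of radius $\asymp 1/n$, i.e. $\omega_\xi\asymp\int_{B_{\tilde\rho}(\xi,1/n)}W_\mu(y)\,dy$. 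The second ingredient is the sharp localization of the kernel $L_{N,\eta}$ from \eqref{3.1}: for every $k$ there is $c_k$ with
$$|L_{N,\eta}(x,y)|\le c_k\,\frac{N^d}{\big(\sqrt{1-\|x\|_2^2}+1/N\big)^{\mu}\big(\sqrt{1-\|y\|_2^2}+1/N\big)^{\mu}}\,\big(1+N\tilde\rho(x,y)\big)^{-k},$$
the key analytic input obtained from the compact formula \eqref{1.4} (see \cite{PX}); the smoothness of $\eta$ is exactly what yields decay of arbitrarily high order $k$. Everything else follows from these two facts together with a Bernstein-type inequality in the $\tilde\rho$-metric controlling the local oscillation of $f\in\Pi_n^d$ at scale $1/n$.

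For the cubature formula I would argue by perturbation. Using the maximal separated set $\Lz$, fix a Voronoi-type partition $\ss=\bigcup_{\xi\in\Lz}A_\xi$ with $\xi\in A_\xi\subset B_{\tilde\rho}(\xi,\dz/n)$, and set tentative weights $w_\xi=\int_{A_\xi}W_\mu(y)\,dy$, so $w_\xi\asymp\omega_\xi$ by the volume estimate. For $f\in\Pi_{4n}^d$ the Bernstein inequality gives
$$\Big|\int_{\ss}f\,W_\mu-\sum_{\xi\in\Lz}w_\xi f(\xi)\Big|\le\sum_{\xi\in\Lz}\int_{A_\xi}|f(y)-f(\xi)|\,W_\mu(y)\,dy\le C\dz\,\|f\|_{1,\mu},$$
so the discrete functional is exact up to a factor $C\dz$. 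To upgrade to an exact formula I would discretize the reproducing identity \eqref{3.1.1}: the operator $\mathcal U f:=\sum_{\xi\in\Lz}w_\xi f(\xi)\,L_{4n,\eta}(\cdot,\xi)$ is a near-identity on $\Pi_{4n}^d$, with $\|\mathcal U-\mathrm{id}\|\le C\dz$ there (again by Bernstein, the kernel localization, and doubling), hence invertible once $\dz\le\gamma$ with $C\gamma<1$. Representing $f\mapsto\int_{\ss}f\,W_\mu$ through $\mathcal U^{-1}$ and reading off the coefficients against the nodes produces weights exact on $\Pi_{4n}^d$; their positivity and the bound $\omega_\xi\asymp n^{-d}(1/n+\sqrt{1-\|\xi\|_2^2})^{2\mu}$ come from the Neumann series $\mathcal U^{-1}=\sum_{j\ge0}(\mathrm{id}-\mathcal U)^j$, whose correction to $w_\xi$ is of relative size $O(\dz)$ by the diagonal dominance furnished by the localization estimate.

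For the MZ inequalities I would treat $q=\infty$ first: the Bernstein inequality and the separation $\min_{\xi\ne\xi'}\tilde\rho(\xi,\xi')\ge\dz/n$ give $\max_{\ss}|f|\le(1+C\dz)\max_{\xi\in\Lz}|f(\xi)|$ for $f\in\Pi_n^d$, while the reverse is trivial, so the two sides are comparable for $\dz\le\gamma$. For $1\le q<\infty$ both the upper estimate $\sum_{\xi}\omega_\xi|f(\xi)|^q\le C\|f\|_{q,\mu}^q$ and the lower estimate $\|f\|_{q,\mu}^q\le C\sum_{\xi}\omega_\xi|f(\xi)|^q$ follow cellwise: on each $A_\xi$ one compares $|f(\xi)|^q$ with the average of $|f|^q$ over $B_{\tilde\rho}(\xi,c/n)$ via Bernstein for the doubling weight, then sums over $\xi$ using $\omega_\xi\asymp w_\xi\asymp\int_{A_\xi}W_\mu$ and the finite overlap of the enlarged balls.

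The main obstacle is the exact positive quadrature, not the MZ inequalities. Once the localization of $L_{N,\eta}$ and the doubling volume bound are in hand, the two-sided MZ inequalities are a routine cellwise comparison. By contrast, producing weights that integrate every element of the full space $\Pi_{4n}^d$ exactly and remain simultaneously positive requires the quantitative invertibility of $\mathcal U$ on $\Pi_{4n}^d$ together with enough diagonal dominance to keep the corrected weights positive; this is exactly where the smallness threshold $\gamma$ is forced and where the arbitrarily-high-order decay of $L_{4n,\eta}$ coming from \eqref{1.4} is indispensable (cf. \cite{Dai1, PX}).
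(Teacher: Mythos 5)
The paper does not actually prove this lemma; it is imported from \cite{Dai1, PX}, and your outline reconstructs essentially the argument of those references: the doubling-volume estimate for $W_\mu$ in the metric $\tilde\rho$ (equivalently, lifting to the upper hemisphere of $\mathbb{S}^d$), the Petrushev--Xu localization of $L_{N,\eta}$, Bernstein-type inequalities for doubling weights, a cellwise comparison plus absorption for the MZ inequalities (note the absorption step is where $\dz\le\gamma$ is needed for the lower MZ bound as well, not only for the cubature), and a near-identity perturbation to get exact positive weights. The one step that does not parse as written is the invertibility of $\mathcal U f=\sum_{\xi}w_\xi f(\xi)L_{4n,\eta}(\cdot,\xi)$ ``on $\Pi_{4n}^d$'': since $\eta$ is supported in $[0,2]$, the kernel $L_{4n,\eta}(\cdot,\xi)$ has degree up to $8n$, so $\mathcal U$ maps $\Pi_{4n}^d$ into the larger space $\Pi_{8n}^d$ and a Neumann series for $\mathcal U^{-1}$ is not immediately meaningful; the standard repair is to work instead with the finite matrix $\bigl(w_\xi L_{4n,\eta}(\xi',\xi)\bigr)_{\xi',\xi}$ acting on the vector of node values (where your diagonal-dominance and $O(\dz)$-correction estimates make literal sense and yield positivity and $\omega_\xi\asymp w_\xi$), or to compose $\mathcal U$ with a uniformly bounded projection onto $\Pi_{4n}^d$. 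With that adjustment your proposal is the argument the cited sources use.
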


\begin{lem}\label{lem4.2} (see \cite[Lemma 3]{WH}) Let $\mu>0,\ \beta\in(0,1/(2\mu))$ and
let $\Lz,\ \oz_\xi$ be given as in Lemma \ref{lem4.1}. Then
$$\sum_{\xi \in \Lz} \oz_\xi^{-\beta}\ll n^{d(1+\beta)}.$$\end{lem}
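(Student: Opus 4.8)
The plan is to convert the sum into an integral of the weight $W_\mu$ over $\ss$ and then reduce to a one-variable estimate in which the hypothesis $\beta<1/(2\mu)$ is precisely the condition ensuring convergence. By \lemref{lem4.1} we have $\oz_\xi\asymp n^{-d}\big(\tfrac1n+\sqrt{1-\|\xi\|_2^2}\big)^{2\mu}$, so writing $\oz_\xi^{-\beta}=\oz_\xi^{-(1+\beta)}\oz_\xi$ and substituting this bound for $\oz_\xi^{-(1+\beta)}$ shows that the asserted inequality is equivalent to
\[
\sum_{\xi\in\Lz}\oz_\xi\Big(\tfrac1n+\sqrt{1-\|\xi\|_2^2}\Big)^{-2\mu(1+\beta)}\ll 1.
\]
It therefore suffices to bound this weighted sum by an absolute constant.

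For each $\xi\in\Lz$ put $B_\xi:=B_{\tilde\rho}(\xi,\tfrac{\dz}{2n})$. Because $\Lz$ is $(\dz/n,\tilde\rho)$-separated the balls $B_\xi$ are pairwise disjoint. I would invoke the standard doubling estimate $\int_{B_{\tilde\rho}(x,t)}W_\mu(y)\,dy\asymp t^d\big(t+\sqrt{1-\|x\|_2^2}\big)^{2\mu}$ valid on the homogeneous space $(\ss,\tilde\rho,W_\mu\,dy)$ (cf. \cite{Dai1, PX}), which gives $\oz_\xi\asymp\int_{B_\xi}W_\mu(y)\,dy$, together with the slowly-varying property $\tfrac1n+\sqrt{1-\|y\|_2^2}\asymp\tfrac1n+\sqrt{1-\|\xi\|_2^2}$ for $y\in B_\xi$. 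Combining these yields
\[
\oz_\xi\Big(\tfrac1n+\sqrt{1-\|\xi\|_2^2}\Big)^{-2\mu(1+\beta)}\asymp\int_{B_\xi}\Big(\tfrac1n+\sqrt{1-\|y\|_2^2}\Big)^{-2\mu(1+\beta)}W_\mu(y)\,dy,
\]
and summing over $\xi$ and using the disjointness of the $B_\xi\subset\ss$ bounds the weighted sum above by $\int_{\ss}\big(\tfrac1n+\sqrt{1-\|y\|_2^2}\big)^{-2\mu(1+\beta)}W_\mu(y)\,dy$.

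It remains to show this last integral is $\ll1$. Passing to polar coordinates and substituting $t=\sqrt{1-\|y\|_2^2}$ turns it into a constant multiple of $\int_0^1\big(\tfrac1n+t\big)^{-2\mu(1+\beta)}t^{2\mu}(1-t^2)^{(d-2)/2}\,dt$. The part over $[\tfrac12,1]$ is plainly $O(1)$, and on $[0,\tfrac12]$ the factor $(1-t^2)^{(d-2)/2}$ is bounded, so it suffices to estimate $\int_0^{1/2}(\tfrac1n+t)^{-2\mu(1+\beta)}t^{2\mu}\,dt$. Splitting at $t=1/n$: on $(0,1/n)$ the first factor is $\asymp n^{2\mu(1+\beta)}$ while $\int_0^{1/n}t^{2\mu}\,dt\asymp n^{-(2\mu+1)}$, for a contribution $\asymp n^{2\mu\beta-1}$; on $(1/n,\tfrac12)$ the integrand is $\asymp t^{-2\mu\beta}$, and $\int_0^{1/2}t^{-2\mu\beta}\,dt$ converges precisely because $2\mu\beta<1$. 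Both terms are $\ll1$ (the first since $2\mu\beta-1<0$), which completes the estimate.

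The routine part is the final one-variable computation; the one genuinely structural ingredient, and the step I expect to require the most care, is the geometric reduction in the second paragraph, namely the doubling and slowly-varying properties of $(\ss,\tilde\rho,W_\mu)$ that identify $\oz_\xi$ with the $W_\mu$-mass of a $\tilde\rho$-ball of radius $1/n$ about $\xi$. These are standard for the ball and may be quoted from \cite{Dai1, PX} or \cite{WH}; once they are in hand, the hypothesis $\beta<1/(2\mu)$ enters only through the convergence of $\int_0^{1/2}t^{-2\mu\beta}\,dt$, which is exactly what pins down the admissible range of $\beta$.
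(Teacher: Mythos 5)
Your argument is correct: the reduction $\oz_\xi^{-\beta}=\oz_\xi\cdot\oz_\xi^{-(1+\beta)}$, the identification of $\oz_\xi$ with the $W_\mu$-mass of a pairwise disjoint family of $\tilde\rho$-balls (together with the slowly varying property of $\tfrac1n+\sqrt{1-\|y\|_2^2}$ on such balls), and the final one-variable integral, where $2\mu\beta<1$ is exactly the convergence condition, constitute a complete proof. The paper itself gives no proof of this lemma, quoting it from \cite[Lemma 3]{WH}, and your route is the standard one for such estimates on the weighted ball, so there is nothing to fault.
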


For $k=1,2,\dots$, let $\gamma>0$ be the same as in Lemma
\ref{lem4.1} and
 let $\Lz_k=\{\xi_1,\dots,\xi_{u_k}\}$
be a maximal $(\gamma {2^{-(k+2)}},\tilde\rho)$-separated subset of
$\ss$. It is easy to know that  $u_k\asymp 2^{kd}$. By Lemma
\ref{lem4.1}, there exists a sequence of positive numbers $w_i
\asymp 2^{-kd}(2^{-k}+\sqrt{1-\|\xi_i\|_2^2})^{2\mu}, \ 1\le i\le
u_k$, for which the following quadrature formula holds for all $f
\in \Pi_{2^{k+4}}^d$,
\begin{equation}\label{3.5}\int_{\ss} f(x)W_\mu(x)\,dx=\sum_{i=1}^{u_k}w_i f(\xi_i). \end{equation}
Moreover,  for any $1\le q\le\iz,\ f\in\Pi_{2^{k+2}}^d$, we have
\begin{equation*}\label{3.6}\|f\|_{q}\asymp
\Big(\sum_{i=1}^{u_k}
|f(\xi_i)|^q\,\oz_i\Big)^{1/q}=\|U_k(f)\|_{\ell_{q,\oz}^{u_k}},\end{equation*}where
 $U_k:\Pi_{2^{k+2}}^d\longmapsto \Bbb R^{u_k}$ is defined by
\begin{equation}\label{3.7}U_k(f)=(f(\xi_1),\dots,f(\xi_{u_k})),\end{equation}and
for $x\in \Bbb R^{u_k}$,
$$\|x\|_{\ell_{q,\oz}^{u_k}}:=\bigg\{\begin{array}{ll}\big(\sum_{i=1}^{u_k} |x_i|^q \oz_i\big)^{\frac 1q},\ \ \ &1\le q<\infty,\\
\max_{1\le i\le u_k} |x_i|, &q=\infty.\end{array} $$ Next, we define
the operator $T_k:\Bbb R^{u_k}\longmapsto \Pi_{2^{k+1}}^d$ by
\begin{equation}\label{3.8}T_ka(x):=\sum_{i=1}^{u_k}a_iw_iL_{2^{k},\eta}(x,\xi_i),\end{equation}
where $a=(a_1,\dots,a_{u_k})\in \Bbb R^{u_k}$, $L_{n,\eta}$ is
defined as in \eqref{3.1}. It is shown in \cite[(2.15)]{WH} that for
any $q$, $1\le q\le \infty$,
\begin{equation}\label{3.9}\|T_ka\|_{q,\mu}\ll \|a\|_{\ell_{q,\oz}^{u_k}}.  \end{equation}
 For  $f\in \Pi_{2^{k}}^d$, by \eqref{3.1.1} and \eqref{3.5} we get
$$f(x)=\int_{\ss}f(y)L_{2^{k},\eta}(x,y)d\sz (y)=\sum_{j=1}^{u_k}w_jf(\xi_j)L_{2^{k},\eta}(x,\xi_j),$$
which means that $f=T_kU_kf$ for any $f\in \Pi_{2^{k}}^d$. We also
use the letters $S_k,\,R_k,\, V_k$ to denote    $u_k\times u_k$ real
diagonal matrixes  as follows:
\begin{equation}\label{3.10}S_k=diag(\oz_1^{\frac 12},\dots,\oz_1^{\frac 12}),\  \ R_k=diag(\oz_1^{\frac 1q},\dots,\oz_1^{\frac 1q}), \ \
V_k=diag(\oz_1^{-\frac 12+\frac 1q},\dots,\oz_1^{-\frac 12+\frac
1q}),
\end{equation}
 and use the letter $R_k^{-1}$ to represent the inverse
matrix of $R_k$. Clearly, for $x\in \Bbb R^{u_k}$,
$$\|R_kx\|_{\ell_q^{u_k}}=\|x\|_{\ell_{q,\oz}^{u_k}}\ \ \ {\rm and}\ \ \ R_k=V_kS_k.$$

\begin{lem}\label{lem3.3} For any $z=(z_1,\dots,z_{u_k})\in\Bbb R^{u_k}$, we have
\begin{equation}\label{3.11}\Big\|\sum_{j=1}^{u_k}\oz_j^{1/2}z_jM_k(\cdot,\xi_j)\Big\|_{2,\mu}\ll
\|z\|_{\ell_2^{u_k}},\end{equation} where $M_k(x,y)$ is given in
\eqref{3.3}, and $\{\xi_1,\dots,\xi_{u_k}\}$ is defined as
above.\end{lem}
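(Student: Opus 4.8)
The plan is to argue by Hilbert-space duality in $L_{2,\mu}$, using the reproducing property of $M_k$ recorded before the lemma together with the $q=2$ case of the Marcinkiewicz--Zygmund inequality in \lemref{lem4.1}; this route avoids any pointwise estimate on the kernel $M_k$. Write $g:=\sum_{j=1}^{u_k}\oz_j^{1/2}z_jM_k(\cdot,\xi_j)\in L_{2,\mu}$ and recall that $\|g\|_{2,\mu}=\sup\{|\lb g,h\rb|:\ h\in L_{2,\mu},\ \|h\|_{2,\mu}\le1\}$. Thus it suffices to bound $|\lb g,h\rb|$ by a fixed constant multiple of $\|z\|_{\ell_2^{u_k}}$ uniformly over the unit ball of $L_{2,\mu}$.

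First I would expand the pairing and insert the reproducing identity $\lb M_k(\cdot,\xi_j),h\rb=\dz_k(h)(\xi_j)$ (which follows from $\lb f,M_k(\cdot,x)\rb=\dz_k(f)(x)$ and the symmetry of $M_k$), obtaining
\begin{equation*}\lb g,h\rb=\sum_{j=1}^{u_k}\oz_j^{1/2}z_j\,\lb M_k(\cdot,\xi_j),h\rb=\sum_{j=1}^{u_k}z_j\,\big(\oz_j^{1/2}\dz_k(h)(\xi_j)\big).\end{equation*}
The Cauchy--Schwarz inequality then gives
\begin{equation*}|\lb g,h\rb|\le\|z\|_{\ell_2^{u_k}}\Big(\sum_{j=1}^{u_k}\oz_j\,|\dz_k(h)(\xi_j)|^2\Big)^{1/2}.\end{equation*}

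To control the last factor, note that $\dz_k(h)=S_{2^k}(h)-S_{2^{k-1}}(h)$ lies in $\bigoplus_{j=2^{k-1}+1}^{2^k}\mathcal{V}_j^d\subset\Pi_{2^k}^d\subset\Pi_{2^{k+2}}^d$. Since $\Lz_k$ is a maximal $(\gamma2^{-(k+2)},\tilde\rho)$-separated set, the $q=2$ case of \lemref{lem4.1} applies to $\dz_k(h)$ and yields
\begin{equation*}\sum_{j=1}^{u_k}\oz_j\,|\dz_k(h)(\xi_j)|^2\asymp\|\dz_k(h)\|_{2,\mu}^2.\end{equation*}
Finally $\dz_k$ is the orthogonal projection of $L_{2,\mu}$ onto the sum of the mutually orthogonal eigenspaces $\mathcal{V}_j^d$, $2^{k-1}<j\le2^k$, so $\|\dz_k(h)\|_{2,\mu}\le\|h\|_{2,\mu}\le1$. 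Combining the three displays yields $|\lb g,h\rb|\ll\|z\|_{\ell_2^{u_k}}$ with constant independent of $h$, $z$ and $k$; taking the supremum over $h$ proves \eqref{3.11}.

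I expect the only delicate point to be the degree bookkeeping, i.e.\ verifying that $\dz_k(h)$ belongs to the polynomial range $\Pi_{2^{k+2}}^d$ on which the MZ equivalence of \lemref{lem4.1} is valid for the set $\Lz_k$; this holds because $\dz_k(h)\in\Pi_{2^k}^d$. As a remark, one could instead expand $\|g\|_{2,\mu}^2=\sum_{i,j}\oz_i^{1/2}\oz_j^{1/2}z_iz_jM_k(\xi_i,\xi_j)$ using $\lb M_k(\cdot,\xi_i),M_k(\cdot,\xi_j)\rb=M_k(\xi_i,\xi_j)$ and then bound the operator norm of this Gram matrix by a Schur row-sum test; that approach, however, requires pointwise localization bounds for $M_k$, the estimate $\oz_i\asymp W_\mu(B_{\tilde\rho}(\xi_i,2^{-k}))$, and the doubling property of $W_\mu\,dx$, and is appreciably more technical than the duality argument sketched here.
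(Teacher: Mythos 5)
Your proposal is correct and follows essentially the same route as the paper: duality in $L_{2,\mu}$, the reproducing property of $M_k$, Cauchy--Schwarz, and the $q=2$ Marcinkiewicz--Zygmund inequality of \lemref{lem4.1} applied to a polynomial of degree at most $2^{k+2}$. The only cosmetic difference is that the paper takes the supremum directly over the unit ball of $\bigoplus_{j=2^{k-1}+1}^{2^k}\mathcal{V}_j^d$ (where the function lives), whereas you take it over all of $L_{2,\mu}$ and then pass to $\dz_k(h)$ via the orthogonal projection; these are equivalent.
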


\begin{proof}Denote by $K$ the set $\big\{g\in \bigoplus\limits_{j=2^{k-1}+1}^{2^k}\mathcal{V}_j^d\ \big|\ \|g\|_{2,\mu}\le1\big\}$. Since
$$\sum\limits_{j=1}^{u_k}\oz_j^{1/2}z_jM_k(\cdot,\xi_j)\in
L_{2,\mu}\bigcap\big(\bigoplus\limits_{j=2^{k-1}+1}^{2^k}\mathcal{V}_j^d\big),$$
By the Riesz representation theorem and Cauchy-Schwarz inequality we
have
\begin{align*}\Big\|\sum_{j=1}^{u_k}\oz_j^{1/2}z_jM_k(\cdot,\xi_j)\Big\|_{2,\mu}&=\sup_{g\in K}\Big|\big\langle
 \sum_{j=1}^{u_k}\oz_j^{1/2}z_jM_k(\cdot,\xi_j), g\big\rangle\Big|=\sup_{g\in K}\big|\sum_{j=1}^{u_k}\oz_j^{1/2}z_jg(\xi_j)\big|\\ &
\le\sup_{g\in K}
\Big(\sum_{j=1}^{u_k}|z_j|^2\Big)^{1/2}\Big(\sum_{j=1}^{u_k}|g(\xi_j)|^2\oz_j\Big)^{1/2}\\
&\ll \sup_{g\in K}(\sum_{j=1}^{u_k}|z_j|^2\Big)^{1/2}
\|g\|_{2,\mu}\le \|z\|_{\ell_2^{u_k}},\end{align*}which proves
\eqref{3.11}.\end{proof}

\begin{thm}\label{thm4.3} Let $1\le q\le\infty,\
\sz\in (0,1)$, and let the sequences of numbers $\{n_k\}$ and
$\{\sz_k\}$ be such that $0\le n_k\le u_k\asymp 2^{kd},\
\sum_{k=1}^\infty n_k\le n,\ \sz_k\in(0,1),\ \sum_{k=1}^\infty
\sz_k\le\dz$. Then
\begin{equation}\label{3.13}\lz_{n,\sz}(W_{2,\mu}^r,\nu,L_{q,\mu})\ll\sum_{k=1}^\infty
2^{-k\rho}\lz_{n_k,\sz_k}(V_k:\Bbb R^{u_k}\to
\ell_q^{u_k},\gz_{u_k}).\end{equation}  \end{thm}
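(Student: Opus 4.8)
The plan is to establish \eqref{3.13} by a dyadic decomposition $f=\sum_{k\ge1}\dz_k(f)$, approximating each block $\dz_k(f)$ by a rank-$n_k$ operator built from the reconstruction map $T_k$, and then assembling one global operator of rank $\le\sum_k n_k\le n$ together with one exceptional set of $\nu$-measure $\le\sz$. For each $k$ I would choose a rank-$\le n_k$ operator $B_k$ on $\R^{u_k}$ and a Borel set $G_k^{\,\mathrm{fin}}\subset\R^{u_k}$ (both specified below), and set $P_k:=T_kB_kU_k\dz_k$ and $G_k:=\{f\in W_{2,\mu}^r:\ S_kU_k(\dz_k f)\in G_k^{\,\mathrm{fin}}\}$. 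With $L:=\sum_k P_k$ and $G:=\bigcup_k G_k$, the union bound gives $\nu(G)\le\sum_k\sz_k\le\sz$, while the embedding $W_{2,\mu}^r\hookrightarrow L_{q,\mu}$ guarantees $f=\sum_k\dz_k(f)$ in $L_{q,\mu}$; hence for $f\notin G$,
\[
\|f-Lf\|_{q,\mu}\le\sum_{k=1}^\infty\|\dz_k(f)-P_kf\|_{q,\mu},
\]
and the whole problem reduces to a single block estimate.

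For the block estimate I would use that $\dz_k(f)\in\Pi_{2^k}^d$, so $\dz_k(f)=T_kU_k(\dz_k f)$ and therefore $\dz_k(f)-P_kf=T_k(I-B_k)U_k(\dz_k f)$. Combining \eqref{3.9} with the identities $R_k=V_kS_k$ and $\|R_kx\|_{\ell_q^{u_k}}=\|x\|_{\ell_{q,\oz}^{u_k}}$ gives
\[
\|\dz_k(f)-P_kf\|_{q,\mu}\ll\|(I-B_k)U_k(\dz_k f)\|_{\ell_{q,\oz}^{u_k}}=\|(V_k-A)y_k\|_{\ell_q^{u_k}},
\]
where $y_k:=S_kU_k(\dz_k f)$ and $A:=R_kB_kS_k^{-1}$; since $R_k,S_k$ are invertible, $\mathrm{rank}\,A=\mathrm{rank}\,B_k\le n_k$, and choosing $B_k$ optimally amounts to choosing an arbitrary rank-$\le n_k$ operator $A$ approximating the diagonal matrix $V_k$ in $\ell_q^{u_k}$ on the random vector $y_k$.

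The crux is then to identify the law of $y_k$ under $\nu$ and to compare it with the standard Gaussian $\gz_{u_k}$. Being a linear image of the centered Gaussian field $f$, the vector $y_k$ is centered Gaussian; from $\dz_k(f)(\xi_i)=\lb\dz_k(f),M_k(\cdot,\xi_i)\rb$ and the relations $\mathrm{Cov}(\hat f_{jm},\hat f_{j'm'})=\lz_j^{-\rho}\dz_{jj'}\dz_{mm'}$ forced by the definition of $\nu$, its covariance $\Sigma_k$ satisfies, for every $z\in\R^{u_k}$,
\[
\lb\Sigma_k z,z\rb=\sum_{2^{k-1}<j\le2^k}\sum_{m}\lz_j^{-\rho}\Big(\sum_i z_i\oz_i^{1/2}\phi_{jm}(\xi_i)\Big)^2\le c\,2^{-2k\rho}\Big\|\sum_i z_i\oz_i^{1/2}M_k(\cdot,\xi_i)\Big\|_{2,\mu}^2\ll 2^{-2k\rho}\|z\|_{\ell_2^{u_k}}^2,
\]
where I use $\lz_j\ge c\,2^{2k}$ on the $k$-th block, Parseval, and the reproducing-kernel bound \eqref{3.11}; thus $\Sigma_k\preceq c\,2^{-2k\rho}I_{u_k}$. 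Since $x\mapsto\|(V_k-A)x\|_{\ell_q^{u_k}}$ is a seminorm, its sublevel sets are convex and symmetric, so the comparison (Anderson) inequality for centered Gaussian measures shows each such set carries no less mass under the Gaussian of covariance $c\,2^{-2k\rho}I_{u_k}$ than under the law $\gz_{\Sigma_k}$ of $y_k$. Taking $G_k^{\,\mathrm{fin}}$ to be the optimal exceptional set for the former — its complement being convex and symmetric, its $\gz_{\Sigma_k}$-measure, hence $\nu(G_k)$, stays $\le\sz_k$ — and pulling out the scalar $(c\,2^{-2k\rho})^{1/2}$ by homogeneity yields
\[
\sup_{f\notin G_k}\|(V_k-A)y_k\|_{\ell_q^{u_k}}\ll 2^{-k\rho}\lz_{n_k,\sz_k}(V_k:\R^{u_k}\to\ell_q^{u_k},\gz_{u_k}),
\]
which is the $k$-th summand of \eqref{3.13}; summing over $k$ completes the argument.

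I expect the third paragraph to be the main obstacle: computing the covariance $\Sigma_k$ of the discretized Gaussian vector and dominating it by a scalar multiple of the identity. This is precisely where the reproducing-kernel estimate \eqref{3.11} is indispensable, and — because, unlike the periodic or Wiener settings, there is no isometry matching the $L_{q,\mu}$ and $\ell_q^{u_k}$ norms — where one is forced to invoke the comparison theorem for Gaussian measures with unequal covariances instead of the orthogonal invariance of the standard Gaussian. A secondary point demanding care is the bookkeeping ensuring that the exceptional set built for the scaled standard Gaussian, once transported through $f\mapsto S_kU_k(\dz_k f)$, keeps $\nu$-measure $\le\sz_k$; this is exactly what Anderson's inequality provides for the convex symmetric complements involved.
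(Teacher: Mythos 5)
Your proposal is correct and follows essentially the same route as the paper: the same block decomposition $\dz_k$, the same operators $T_k,U_k,S_k,R_k,V_k$ with a rank-$n_k$ matrix approximant per block, the covariance bound $\Sigma_k\preceq c\,2^{-2k\rho}I_{u_k}$ via Lemma \ref{lem3.3}, and the Gaussian comparison theorem for convex symmetric sets (the paper cites Theorem 1.8.9 of Bogachev for what you call Anderson's inequality). Your direct Parseval computation of $\lb\Sigma_k z,z\rb$ through the coefficients $\hat f_{jm}$ is just a rephrasing of the paper's calculation with $C_\nu$ and the kernels $M_k^{(-\rho,0)}$.
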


\begin{proof} For convenience, we
write
$$\lz_{n_k,\sz_k}:=\lz_{n_k,\sz_k}(V_k:\Bbb R^{u_k}\to
\ell_q^{u_k},\gz_{u_k}),$$ where $\gz_{u_k}$ is the standard
Gaussian measure in $\Bbb R^{u_k}$. Denote by $L_k$ a linear
operator from $\Bbb R^{u_k}$ to $\Bbb R^{u_k}$ such that the rank of
$L_k$ is at most $n_k$ and
$$\gz_{u_k}\Big(\{y\in \Bbb  R^{u_k}\ |\
\|V_ky-L_ky\|_{\ell_q^{u_k}}>2\lz_{n_k,\sz_k}\}\Big)\le \sz_k.$$Then
for any $f\in W_{2,\mu}^r$, by \eqref{3.9} we have
\begin{align}\|\dz_k(f)-T_kR_k^{-1}L_kS_kU_k\dz_k(f)\|_{q,\mu}&=\|T_kU_k\dz_k(f)-T_kR_k^{-1}L_kS_kU_k\dz_k(f)
\|_{q,\mu}\notag \\ &\ll
\|U_k\dz_k(f)-R_k^{-1}L_kS_kU_k\dz_k(f)\|_{\ell_{q,\oz}^{u_k}}\notag\\
&=\|V_kS_kU_k\dz_k(f)-L_kS_kU_k\dz_k(f)\|_{\ell_q^{u_k}},
\label{3.14}\end{align}where $\dz_k,\, U_k,\, T_k$, and $ S_k,\,
V_k,\, R_k$ are defined by \eqref{3.2}, \eqref{3.7},  \eqref{3.8},
and \eqref{3.10}, respectively. Denote $y=S_kU_k\dz_k(f)
=(\oz_1^{1/2}\dz_k(f)(\xi_1),\dots,\oz_{u_k}^{1/2}\dz_k(f)(\xi_{u_k}))\in\Bbb
R^{u_k}$. Note that for $x\in \ss$,
$$\dz_k(f)(x)=\lb f, M_k(\cdot,x)\rb=\lb f^{(-r)}, M_k^{(-r,0)}(\cdot,x)\rb_r
=\lb f, M_k^{(-2r,0)}(\cdot,x)\rb_r,$$where $M_{k}^{(r_1,0)}(x,y)$
is the $r_1$-order partial derivative of $M_k(x,y)$ with respect to
the variable $x$, $r_1\in\Bbb R$. By the property of  Gaussian
measures  we know that  if a random vector $f$ in $W_{2,\mu}^r$ is a
centered Gaussian random vector with covariance operator $C_\nu$,
then the  vector
$$y=S_kU_k\dz_k(f)=(\lb f,
\oz_1^{1/2}M_k^{(-2r,0)}(\cdot,\xi_1)\rb_r,\dots, \lb f,
\oz_{u_k}^{1/2}M_k^{(-2r,0)}(\cdot,\xi_{u_k})\rb_r )$$ in $\Bbb
R^{u_k}$ is a random vector with a centered Gaussian distribution
$\gamma$ in $\Bbb R^{u_k}$ and its covariance matrix $C_\gamma$ is
given by
$$C_\gamma=\Big( \lb C_\nu (\oz_i^{1/2}M_k^{(-2r,0)}(\cdot,\xi_i)),
\oz_j^{1/2}M_k^{(-2r,0)}(\cdot,\xi_j)\rb_r\Big)_{i,j=1}^{u_k}.$$Note
that
\begin{align*} &\quad \ \Big\lb C_\nu (\oz_i^{1/2}M_k^{(-2r,0)}(\cdot,\xi_i)),
\oz_j^{1/2}M_k^{(-2r,0)}(\cdot,\xi_j)\Big\rb_r\\ &=\Big\lb
\oz_i^{1/2}M_k^{(-2r-s,0)}(\cdot,\xi_i), \oz_j^{1/2}M_k^{(-2r,0)}(\cdot,\xi_j)\Big\rb_r\\
&=\Big\lb \oz_i^{1/2}M_k^{(-\rho,0)}(\cdot,\xi_i),
\oz_j^{1/2}M_k^{(-\rho,0)}(\cdot,\xi_j)\Big\rb. \end{align*}Since
for any $z=(z_1,\dots,z_{u_k})\in \Bbb R^{u_k}$,
$$\sum_{j=1}^{u_k}\oz_j^{1/2}z_jM_k(\cdot,\xi_j)\in
\bigoplus_{j=2^{k-1}+1}^{2^k}\mathcal{V}_j^d,$$ by \eqref{3.11} we
get
\begin{align} \int_{\Bbb R^{u_k}}(y,z)^2\gamma
(dy)&=zC_\gamma z^T=\sum_{i,j=1}^{u_k}z_iz_j \big\lb
\oz_i^{1/2}M_k^{(-\rho,0)}(\cdot,\xi_i),\oz_j^{1/2} M_k^{(-\rho,0)}(\cdot,\xi_j)\big\rb\notag\\
&=\Big\lb
\sum_{j=1}^{u_k}\oz_j^{1/2}z_jM_k^{(-\rho,0)}(\cdot,\xi_j),
\sum_{j=1}^{u_k}\oz_j^{1/2}z_jM_k^{(-\rho,0)}(\cdot,\xi_j)\Big\rb\notag\\
&= \Big
\|\sum_{j=1}^{u_k}\oz_j^{1/2}z_jM_k^{(-\rho,0)}(\cdot,\xi_j)\Big\|_2^2\asymp
2^{-2k\rho}\Big \|\sum_{j=1}^{u_k}\oz_j^{1/2}z_jM_k(\cdot,\xi_j)\Big\|_2^2\notag\\
&\ll 2^{-2k\rho}\|z\|_{\ell_2^{u_k}}=2^{-2k\rho}\int_{\Bbb
R^{u_k}}(y,z)^2\gz_{u_k}(dy).\label{3.15}\end{align}

Now consider the subset of $W_{2,\mu}^r$ $$G_{k}:=\left\{f\in
W_{2,\mu}^r\ |\
\|\dz_k(f)-T_kR_k^{-1}L_kS_kU_k\dz_k(f)\|_q>2c_1c_22^{-k\rho}
\lz_{n_k,\sz_k}\right\},$$ where $c_1,c_2$ are the positive
constants given in \eqref{3.14} and \eqref{3.15}. Then it follows
from \eqref{3.14} that
\begin{align*} \nu(G_k)&\le \nu\Big(\Big\{f\in
W_{2,\mu}^r\ |\
\|V_kS_kU_k\dz_k(f)-L_kS_kU_k\dz_k(f)\|_{\ell_q^{u_k}}>2c_22^{-k\rho}
\lz_{n_k,\sz_k}\Big\}\Big)\\ &=\gamma\Big(\big\{y\in\Bbb R^{u_k}\ |\
\|V_ky-L_ky\|_{\ell_q^{u_k}}>2c_22^{-k\rho}
\lz_{n_k,\sz_k}\big\}\Big).\end{align*} By Theorem 1.8.9 in \cite[p.
29]{Bo}, we know that if $\gamma_1$ and $\gamma_2$ are two centered
Gaussian measures on $\Bbb R^N$ and satisfy
$$\int_{\Bbb R^N}(y,x)^2\gamma_1(dx)\ge \int_{\Bbb R^N}(y,x)^2\gamma_2(dx),
\ \ \ \forall\, y\in \Bbb R^N,$$then for every convex symmetric set
$E$, $\gamma_1(E)\le \gamma_2(E).$ Note that for any $t>0$, the set
$\{y\in \Bbb R^{u_k} |\ \|V_ky-L_ky\|_{\ell_q^{u_k}}\le t\}$ is
convex symmetric. It follows from \eqref{3.15} that
\begin{align*} \nu(G_k)&\le \gamma\Big(\big\{y\in\Bbb R^{u_k}\ |\
\|V_ky-L_ky\|_{\ell_q^{u_k}}>2c_22^{-k\rho}
\lz_{n_k,\sz_k}\big\}\Big)\\ &\le\lz\Big(\big\{y\in\Bbb R^{u_k}\ |\
\|V_ky-L_ky\|_{\ell_q^{u_k}}>2c_22^{-k\rho}
\lz_{n_k,\sz_k}\big\}\Big)\\ &= \gz_{u_k}\Big(\big\{y\in\Bbb
R^{u_k}\ |\ \|V_ky-L_ky\|_{\ell_q^{u_k}}>2
\lz_{n_k,\sz_k}\big\}\Big)\le \sz_k,\end{align*}where $\lz$ is a
centered Gaussian measure in $\Bbb R^{u_k}$ with covariance matrix
$c_2^22^{-2k\rho}I_{u_k}$, $I_{u_k}$ is the identity matrix in $\Bbb
R^{u_k}$. Let us consider the set $G=\bigcup_{k=1}^\infty G_k$ and
the linear operator $\widetilde{T_n}$ on $W_{2,\mu}^r$ which is
given by
$$\widetilde{T_n}f=\sum_{k=1}^\infty T_kR_k^{-1}L_kS_kU_k\dz_k(f).$$ From the hypothesis
of the theorem, we get that
$$\nu(G)\le \sum_{k=1}^\infty \nu(G_k)\le \sum_{k=1}^\infty
\sz_k\le\sz,$$ and
$${\rm rank}\,\widetilde{T_n}\le \sum_{k=1}^\infty {\rm
rank}\,(T_kR_k^{-1}L_kS_kU_k\dz_k)\le \sum_{k=1}^\infty n_k\le
n.$$Consequently, by the definitions of $G,\ \widetilde{T_n},\
\{G_k\}$, and $\{L_k\}$,
\begin{align*}\lz_{n,\dz}\Big(W_{2,\mu}^r,\nu,L_{q,\mu}\Big) &
\le\sup_{f\in W_{2,\mu}^r\backslash
G}\|f-\widetilde{T_n}f\|_{q,\mu}\\&\le \sup_{f\in
W_{2,\mu}^r\backslash G}\sum_{k=1}^\infty
\Big\|\dz_k(f)-T_kR_k^{-1}L_kS_kU_k\dz_k(f)\Big\|_{q,\mu}\\ &\le
\sum_{k=1}^\infty \sup_{f\in W_{2,\mu}^r\backslash
G_k}\Big\|\dz_k(f)-T_kR_k^{-1}L_kS_kU_k\dz_k(f)\Big\|_{q,\mu}\\ &
\ll\sum_{k=1}^\infty 2^{-k\rho}\lz_{n_k,\dz_k},\end{align*}which
completes the proof of Theorem \ref{thm4.3}.\end{proof}

Next we consider the lower estimates. We assume that $m\ge 6$ and
$b_1m^d\le n\le 2b_1m^d$ with $b_1>0$ being independent of $n$ and
$m$. We let $\{x_j\}_{j=1}^{N}\subset \{x\in \ss\,\big|\ \|x\|_2\le
2/3\}$ such that $N\asymp m^d$ and
$$\{x\in \ss\,\big|\ \|x-x_j\|_2\le 2/m\}\bigcap\, \{x\in \ss\,\big|\
\|x-x_i\|_2\le 2/m\}=\emptyset, \ \ \ {\rm  if}\ \ i\neq j.$$
Obviously, such points $x_j$  exist. We may take $b_1>0$
sufficiently large so that $N\ge 2n$. Let  $\vr^1$ be a
$C^\infty$-function on $\R^d$ supported in $\{x\in \R^d\, |\
\|x\|_2\le 1\}$  and be equal to 1 on $\{x\in \R^d\, |\ \|x\|_2\le
2/3\}$, and let $\vr^2$ be  a nonnegative $C^\infty$-function on
$\R^d$ supported in $\{x\in \R^d\, |\ \|x\|_2\le 1/2\}$ and be equal
to 1 on $\{x\in \R^d\, |\ \|x\|_2\le 1/4\}$. We define
$$\vr_i(x)=\vr^1(m(x-x_i))-c_i\vr^2(m(x-x_i)),$$for some $c_i$ such that
$\int_{\ss}\vr_i(x)W_\mu(x)dx=0, \ i=1,\dots,N$.
 We set
$$A_N:={\rm span}\, \{\vr_1,\dots,\vr_{N}\}=\Big\{F_{\bf a}(x)=\sum_{j=1}^{N}a_j\vr_j(x)\,
 :\ {\bf a}=(a_1,\dots,a_{N})\in \Bbb R^N\Big\}.$$
Clearly,  $$\vr_j\in W_{2,\mu}^r,\ \ {\rm supp}\, \vr_j\subset
\{x\in \ss\,\big|\ \|x-x_j\|_2\le 1/m\}\subset\{x\in \ss\,\big|\
\|x\|_2\le 5/6\},$$ $$ \|\vr_j\|_{q,\mu}\asymp
\Big(\int_{\ss}|\vr_j(x)|^q\,dx\Big)^{1/q}\asymp m^{-d/q},\ \ \ 1\le
q\le \infty,\ \ j=1,\dots,N, $$ and
$${\rm supp}\, \vr_j\bigcap {\rm supp}\, \vr_i\ =\emptyset\ \ \
(i\neq j).$$ Hence, if $F_{\bf a}\in A_N,\ {\bf
a}=(a_1,\dots,a_{N})\in \Bbb R^N$, then
\begin{equation}\label{3.16}\|F_{\bf a}\|_{q,\mu}\asymp
\Big(N^{-1}\sum_{j=1}^{N}|a_j|^q\Big)^{1/q}=m^{-d/q}\|{\bf
a}\|_{\ell_q^{N}}. \end{equation}

For a positive integer $v=0,1,\dots$ and $F_{\bf a}\in A_N,\ {\bf
a}=(a_1,\dots,a_{N})\in \Bbb R^N$, it follows from the definition of
$-D_\mu^d$  that
$$\supp(-D_\mu^d)^{v} (\vr_j)\subset \{x\in\ss\ \big|\ \|x-x_j\|_2\le 1/m\},$$ and $$ \|(-D_\mu^d)^{v} (\vr_j)\|_{q,\mu}\ll m^{2v-d/q}.$$
Hence, for $1\le q\le \infty$ and $F_{\bf a}=\sum_{j=1}^Na_j\vr_j\in
A_N$,
$$\|(-D_\mu^d)^{v} (F_{\bf a})\|_{q,\mu}\ll
 m^{2v-d/q}\|{\bf a}\|_{\ell_q^{N}}.$$ It then follows by the
 Kolmogorov type inequality (see \cite[Theorem 8.1]{Di}) that
 \begin{align}\|F_{\bf a}^{(\rho)}\|_{q,\mu}&=\|(-D_\mu^d)^{\rho/2} (F_{\bf a})\|_{q,\mu}\notag\\ &\ll \|(-D_\mu^d)^{1+[\rho]}
 (F_{\bf a})\|_{q,\mu}^{\frac{\rho}{2+2[\rho]}}\ \|F_{\bf a}\|_{q,\mu}^{1-\frac{\rho}{2+2[\rho]}}\notag\\ &\ll
 m^{\rho-d/q}\|{\bf a}\|_{\ell_q^{N}}\ll m^{\rho}\|F_{\bf a}\|_{q,\mu}.\label{3.17}\end{align}

 For $f\in L_{1,\mu}$ and $x\in \ss$,
we define
$$P_N(f)(x)=\sum_{j=1}^N \frac{\vr_j(x)}{\|\vr_j\|_{2,\mu}^2}\int_{\ss}
f(y){\vr_j(y)}W_\mu(y)dy $$ and $$ Q_N(f)(x)=\sum_{j=1}^N
\frac{\vr_j(x)}{\|\vr_j\|_{2,\mu}^2}\int_{\ss} f
(y){\vr_j^{(\rho)}(y)}W_{\mu}(y)dy.$$Obviously, the operator $P_N$
is the orthogonal projector from $L_{2,\mu}$ to $A_N$, and if $f\in
W_{2,\mu}^\rho$, then $Q_N(f)(x)=P_N(f^{(\rho)})(x)$. Also, it
follows from \cite{WH} that $P_{N}$ is the  bounded operator from
$L_{q,\mu}$ to $A_N\bigcap L_{q,\mu}$, i.e., for $1\le q\le \infty$,
\begin{equation}\label{3.18} \|P_{N}(f)\|_{q,\mu}\ll
\|f\|_{q,\mu}.\end{equation}
 Since $Q_N(f)\in A_N$ for $f\in W_{2,\mu}^\rho$,
by \eqref{3.17} we have
\begin{equation}\label{3.19}\|(Q_N(f))^{(\rho)}\|_{2,\mu}\ll m^{\rho} \|Q_N(f)\|_{2,\mu}=m^\rho
\|P_N(f^{(\rho)})\|_{2,\mu}\ll m^\rho
\|f^{(\rho)}\|_{2,\mu}.\end{equation}

\

\begin{thm}\label{thm4.4} Let $1\le q\le \infty,\
 \dz\in(0,1)$, and let $N$ be given above.  Then \begin{equation*}\label{3.20}\lz_{n,\dz}(W_{2,\mu}^r,\nu,L_{q,\mu})\gg
 n^{-\rho/d+1/2-1/q}\lz_{n,\dz}(I_N:\Bbb R^{N}\to \ell_q^N,\gz_{N}), \end{equation*} where $N\asymp n,\ N\ge 2n$, $I_N$
 is the $N$ by $N$ identity matrix, and $\gamma_N$ is   the
standard Gaussian measure in $\Bbb R^N$.\end{thm}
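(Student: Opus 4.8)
The plan is to establish a lower bound for the probabilistic linear width on the infinite-dimensional Sobolev space by reducing it to the probabilistic linear width of the identity matrix $I_N$ on $\RR^N$, working through the finite-dimensional subspace $A_N = \mathrm{span}\{\vr_1,\dots,\vr_N\}$ constructed above. The key structural facts already in hand are: the norm equivalence \eqref{3.16}, which identifies $\|F_{\mathbf a}\|_{q,\mu}$ with $m^{-d/q}\|{\mathbf a}\|_{\ell_q^N}$; the Bernstein-type inequality \eqref{3.17}, bounding the $\rho$-derivative of elements of $A_N$; and the boundedness \eqref{3.18}--\eqref{3.19} of the projectors $P_N, Q_N$. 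Since $N\asymp n$ and $m^d\asymp n$, the scaling factor $m^{-d/q}$ becomes $n^{-1/q}$ (up to constants) and the combinatorial dimension $N$ may be treated as comparable to $n$.

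First I would transfer the Gaussian measure $\nu$ on $W_{2,\mu}^r$ to a Gaussian measure on the coordinate space $\RR^N$ via the map $\mathbf a \mapsto F_{\mathbf a}$. As in the proof of Theorem \ref{thm4.3}, the random vector of coefficients arising from a $\nu$-distributed $f$ (extracted through the functionals $Q_N$, i.e. through the inner products $\langle f, \vr_j^{(\rho)}\rangle$ suitably normalized) is a centered Gaussian vector in $\RR^N$; I would compute its covariance matrix and use \eqref{3.19} together with \eqref{2.15}--\eqref{2.16} to show that this covariance is bounded \emph{below} by a constant multiple of $m^{-2\rho} I_N$ in the quadratic-form sense. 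The estimate \eqref{3.17} controlling $\|F_{\mathbf a}^{(\rho)}\|$ by $m^\rho\|F_{\mathbf a}\|$ is exactly what yields this lower bound on the covariance after passing through the correlation operator $C_\nu$ (whose eigenvalues are $\lz_l^{-s/2}$, giving the shift $r\mapsto\rho$).

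Next I would invoke the Gaussian comparison theorem (Theorem 1.8.9 in \cite{Bo}, already quoted in the proof of Theorem \ref{thm4.3}): if one centered Gaussian measure dominates another in the quadratic-form order, then on every convex symmetric set the measure with larger covariance assigns smaller mass. Because the sets $\{\|x\|_{\ell_q^N}\le t\}$ are convex symmetric, this lets me replace the transported measure $\gamma$ by the scaled standard measure $\gz_N$ (with covariance $c\,m^{-2\rho}I_N$) at the cost of a constant, so that any exceptional set of small $\nu$-measure pulls back to an exceptional set of comparably small $\gz_N$-measure. Combined with the norm identification \eqref{3.16}, an arbitrary rank-$n$ linear operator approximating $f$ on $W_{2,\mu}^r\setminus G_\dz$ induces a rank-$n$ operator approximating the identity on $\RR^N\setminus \widetilde G_\dz$, and tracking the scaling constants $m^{-\rho}$ (from the covariance) and $m^{-d/q}$ (from \eqref{3.16}) produces the factor $m^{-\rho-d/q}\asymp n^{-\rho/d-1/q}$; restoring the $\ell_q^N$ normalization of the identity width absorbs one $m^{d/2}$ and yields the claimed exponent $-\rho/d+1/2-1/q$.

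The main obstacle I expect is the direction of the comparison inequality. For the lower bound I need the transported measure to be \emph{more spread out} than the reference measure on every small ball, which requires the covariance lower bound $C_\gamma \gg m^{-2\rho}I_N$ rather than an upper bound; proving this forces me to bound $\|F_{\mathbf a}\|_{2,\mu}$ below by $\|F_{\mathbf a}^{(\rho)}\|_{2,\mu}/m^\rho$ uniformly, and to verify that the functionals defining the coordinate vector are genuinely nondegenerate (that the $\vr_j$ remain linearly independent with controlled Gram matrix). The disjoint-support property of the $\vr_j$ makes the Gram matrix nearly diagonal, which should secure this nondegeneracy, but the bookkeeping of constants through $C_\nu$, $Q_N$, and the rescaling to $\gz_N$ is the delicate part, and I would carry it out carefully to ensure the convex-symmetric comparison applies in the correct direction.
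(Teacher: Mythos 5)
Your overall architecture (reduce to $\RR^N$ through the space $A_N$, use \eqref{3.16} and \eqref{3.21} for the $m^{d/2-d/q}$ scaling, use \eqref{3.17}/\eqref{3.19} to produce the factor $m^{\rho}$, and finish with a Gaussian comparison) matches the paper's, but the measure-theoretic core of your argument has a genuine gap. The coordinate functionals that make the reduction work are $y_j=\lb f,\vr_j^{(\rho)}\rb/\|\vr_j\|_{2,\mu}$, i.e.\ $y=J_NQ_N(f)$; since the $g_j=\vr_j/\|\vr_j\|_{2,\mu}$ are orthonormal in $L_{2,\mu}$ (disjoint supports) and $\rho=r+s/2$, the vector $y$ is \emph{exactly} standard Gaussian under $\nu$ --- its covariance is $I_N$, not $\asymp m^{-2\rho}I_N$. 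So no covariance comparison in the style of Theorem 1.8.9 of \cite{Bo} is needed on $\RR^N$, and no factor $m^{-\rho}$ can be extracted from the covariance. The factor $m^{\rho}$ has a different origin, and it is precisely the step your proposal omits: the induced rank-$n$ operator $J_NP_NT_nL_N$ approximates $y=J_NQ_Nf$, hence what must be controlled is $\|Q_Nf-T_nQ_Nf\|_{q,\mu}$, whereas the hypothesis only controls $\|f-T_nf\|_{q,\mu}$ off a set of $\nu$-measure $\dz$. One needs
$$\nu\big(\{f:\ \|Q_Nf-T_nQ_Nf\|_{q,\mu}>2cm^{\rho}\lz_{n,\dz}\}\big)\le\nu\big(\{f:\ \|f-T_nf\|_{q,\mu}>2\lz_{n,\dz}\}\big)\le\dz,$$
and the tool for this is not the covariance-domination theorem but Theorem 3.3.6 of \cite{Bo}: if $A$ maps $H(\nu)=W_{2,\mu}^{\rho}$ into itself with $\|Af\|_{H(\nu)}\le\|f\|_{H(\nu)}$, then $\nu(E)\le(\nu\circ A^{-1})(E)$ for every absolutely convex $E$. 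By \eqref{3.19} the operator $A=\frac1{cm^{\rho}}Q_N$ is such a contraction, and applying this to the absolutely convex sets $\{f:\|f-T_nf\|_{q,\mu}\le t\}$ yields the display above; this is the unique place where $m^{\rho}$ enters.

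If you instead run your plan literally --- take the $P_N$-coefficients $\lb f,g_j\rb$, whose covariance is indeed bounded below by $c\,m^{-2\rho}I_N$ via \eqref{3.17}, and apply Theorem 1.8.9 --- you still face the same unaddressed problem of comparing $\|P_Nf-T_nP_Nf\|_{q,\mu}$ with $\|f-T_nf\|_{q,\mu}$: the preimage of the bad set in $\RR^N$ is a condition on $P_Nf$, not on $f$, so it is not contained in the set whose $\nu$-measure you know to be at most $\dz$. Repairing this again requires the Cameron--Martin contraction argument and costs a second factor $m^{\rho}$, so the resulting bound reads $\lz_{n,\dz}(I_N:\RR^N\to\ell_q^N,\gz_N)\ll m^{2\rho+d/q-d/2}\lz_{n,\dz}$, weaker than the theorem by $m^{\rho}$. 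In short, the comparison theorem you cite is the right one for the upper-bound discretization in Theorem \ref{thm4.3} but the wrong one here; without the image-measure inequality for $\frac1{cm^{\rho}}Q_N$ the reduction does not close with the claimed exponent $-\rho/d+1/2-1/q$.
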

\begin{proof}
 Let $T_n$ be a bounded linear operator on  $W_{2,\mu}^r$ with  rank $T_n\le
n$ such that  $$\nu\big(\{ f\in W_{2,\mu}^r\ \big| \
\|f-T_nf\|_{q,\mu}>2\lz_{n,\delta}\}\big)\le\delta,$$where
$\lz_{n,\delta}:=\lz_{n,\delta}(W_{2,\mu}^r,\nu,L_{q,\mu})$. Note
that if $A$ is a bounded linear operator from $W_{2,\mu}^r$ to
$W_{2,\mu}^r$ and from $H(\nu)$ to $H(\nu)$, then the image measure
$\lz$ of $\nu$ under $A$ is also a centered Gaussian measure on
$W_{2,\mu}^r$ with covariance
$$R_{\lz}(f)(f)=\lb A^*C_\nu f, A^*C_\nu f\rb_{H(\nu)},\ \ \ \
f\in W_{2,\mu}^r,$$where $C_\nu$ is the covariance of the measure
$\nu$, $H(\nu)=W_{2,\mu}^\rho$ is the Camera-Martin space of $\nu$,
and $A^*$ is the adjoint of $A$ in $H(\nu)$ (see \cite[Theorem
3.5.1, p. 112]{Bo}). Furthermore, if the operator $A$ also satisfies
$$\|Af\|_{H(\nu)}\le \|f\|_{H(\nu)},$$then
$$R_\lz(f)(f)=\|A^*C_\nu f\|_{H(\nu)}^2\le \|A^*\|^2\|C_\nu
f\|_{H(\nu)}^2\le \lb C_\nu f, C_\nu f\rb_{H(\nu)}=R_\nu(f)(f).$$ By
Theorem 3.3.6 in \cite[p. 107]{Bo}, we get that for any absolutely
convex Borel set $E$ of $W_{2,\mu}^r$, there holds inequality
$$\nu(E)\le \lz(E).$$ It follows from \eqref{3.19} that
$$\|Q_N(f)\|_{H(\nu)}=\|(Q_N(f))^{(\rho)}\|_{2,\mu}\ll m^\rho\|f^{(\rho)}\|_{2,\mu}=m^{\rho}\|f\|_{H(\nu)}.$$ Then there exists
a positive constant $c_3$ such that $$\|\frac
1{c_3m^\rho}Q_N(f)\|_{H(\nu)}\le \|f\|_{H(\nu)}.$$ Note that for any
$t>0$, the set $\{f\in W_{2,\mu}^r\ |\ \|f-T_nf\|_{q,\mu}\le t\}$ is
absolutely convex. It then follows that
\begin{align*}&\qquad \nu\big(\{ f\in W_{2,\mu}^r\ | \
\|f-T_nf\|_{q,\mu}>2\lz_{n,\delta}\}\big)\\ &\ge \nu\big(\{ f\in
W_{2,\mu}^r\ | \ \|Q_Nf-T_nQ_Nf\|_{q,\mu}>2c_3m^\rho
\lz_{n,\delta}\}\big).\end{align*} Now we define the linear
operators $L_N:\Bbb R^{N}\longmapsto A_N$ and $J_N: A_N\longmapsto
\Bbb R^N$ by
$$L_N({\bf a})(x)=\sum_{i=1}^{N}\frac{a_i\,\vr_i(x)}{\|\vr_i\|_{2,\mu}},\ \ \ {\bf a}=(a_1,\dots,a_N)\in \Bbb R^N$$ and
$$J_N(F_{\bf a})=(a_1\|\vr_1\|_{2,\mu},\dots,a_N\|\vr_N\|_{2,\mu}),  \ \ F_{\bf a}\in A_N,$$
respectively. Obviously, $L_NJ_N(F_{\bf a})=F_{\bf a}$ for any
$F_{\bf a}\in A_N$. Set $y=(y_1,\dots,y_N)\in \Bbb R^N$, where
$y_j=\frac 1{\|\vr_j\|_{2,\mu}}\lb f,\vr_{j}^{(\rho)}\rb$. Then
$y=J_NQ_N(f)$. It follows from \eqref{3.16} and the fact
$\|\vr_j\|_{2,\mu}\asymp m^{-d/2}$ that
\begin{equation}\label{3.21}\|L_N({\bf a})\|_{q,\mu}\asymp
\Big(N^{-1}\sum_{j=1}^{N}\frac{|a_j|^q}{\|\vr_j\|_{2,\mu}^q}\Big)^{1/q}\asymp
m^{-d/q+d/2}\|{\bf a}\|_{\ell_q^{N}}. \end{equation}
  By \eqref{3.18}
and \eqref{3.21}, we know that for any $f\in W_{2,\mu}^r$,
\begin{align*}  \|Q_N(f)-T_nQ_N(f)\|_{q,\mu}&\gg \|P_N(Q_N(f))-P_NT_nQ_N(f)\|_{q,\mu}\\ &=\|L_NJ_NQ_N(f)-L_NJ_NP_NT_nL_NJ_NQ_N(f)\|_{q,\mu}\\ &\gg
 m^{-d/q+d/2}\|J_NQ_N(f)-J_NP_NT_nL_NJ_NQ_N(f)\|_{\ell_q^N}\\
 &\gg m^{-d/q+d/2}\|y-J_NP_NT_nL_Ny\|_{\ell_q^N}.\end{align*}
 We remark that   $g_{k}=\frac{\vr_k}{\|\vr_k\|_{2,\mu}},\ k=1,2,\dots,N$
is an orthonormal  system  in $L_{2,\mu}$ and $g_k\in
H(\nu)=W_{2,\mu}^\rho$. Then the random vector
 $(\lb f,g_1^{(\rho)}\rb,\dots,\lb f,g_N^{(\rho)}\rb)=y$ in $\Bbb R^N$ on the measurable
space $(W_{2,\mu}^r,\nu)$ has the  standard Gaussian distribution
$\gamma_N$ in $\Bbb R^N$. It then follows that
\begin{align*}&\quad \ \nu\big(\{ f\in W_{2,\mu}^r\ \big|
\ \|Q_Nf-T_nQ_Nf\|_{q,\mu}>2c_3m^\rho \lz_{n,\delta}\}\big ) \\ &\ge
\nu\big(\{ f\in W_{2,\mu}^r\ | \ \|y-J_NP_NT_nL_Ny\|_{\ell_q^N}>
c_4m^{\rho+d/q-d/2} \lz_{n,\delta}\}\big)\\ &=\gz_N\big( \{ y\in
\Bbb R^N\ |\ \|y-J_NP_NT_nL_Ny\|_{\ell_q^N}>
c_4m^{\rho+d/q-d/2}\lz_{n,\delta}\} \big)=:\gz_N(G),\end{align*}
where $c_4$ is a positive constant. Clearly, rank$\,(
J_NP_NT_nL_N)\le n$ and
$$ \gz_N(G)\le \nu\big(\{ f\in W_{2,\mu}^r\ \big| \
\|f-T_nf\|_{q,\mu}>2\lz_{n,\delta}\}\big)\le \delta.$$ Therefore,
\begin{align*}\lz_{n,\delta}(I_N:\Bbb R^N\to\ell_q^N, \gz_N)&\le \sup_{y\in\Bbb R^N\backslash G} \|y-J_NP_NT_nL_Ny\|_{\ell_q^N}
\ll m^{\rho+d/q-d/2}\lz_{n,\delta}.\end{align*} That is,
$$\lz_{n,\delta}(W_{2,\mu}^r,\nu,L_{q,\mu})\gg n^{-\rho/d+1/2-1/q}
\lz_{n,\delta}(I_N:\Bbb R^N\to\ell_q^N, \gz_N),$$which completes the
proof of Theorem \ref{thm4.4}.
\end{proof}

\section{Proofs of Theorems \ref{thm1.1} and \ref{thm1.2}}

\noindent{\it Proof of Theorem \ref{thm1.1}.}\ \  The lower
estimates for $\lz_{n,\dz}(W_{2,\mu}^r,\nu,L_{q,\mu})$ follow from
Theorem \ref{thm4.4}, and \eqref{2.1},   \eqref{2.2},   and
\eqref{2.3} for $1\le q\le \infty$ immediately.

For the upper estimates for $\lz_{n,\dz}(W_{2,\mu}^r,\nu,L_{q,\mu})$
for $2\le q\le \infty$, we use Theorem \ref{thm4.3}. For any fixed
natural number $n$, assume $C_12^{md}\le n\le C_1^2 2^{md}$ with
$C_1>0$ to be specified later.
 We may take sufficiently
  small  positive  numbers $\vz>0$ such that $\rho>\frac d2 +(1+\vz)(2+\vz)\mu d(\frac12-\frac1q)$ and define
\begin{equation*}n_j=\left\{\begin{array}{ll} u_j,  & {\rm if}\ \  j\le m, \\
\Big[ u_j2^{d(1+\vz)(m-j)-1}\Big], & {\rm if}\   j> m
,\end{array}\right.  \  {\rm and} \ \dz_j=
\left\{\begin{array}{ll} 0,  & {\rm if}\  j\le m, \\
\dz 2^{m-j}, & {\rm if}\   j> m
,\end{array}\right.\end{equation*}where $u_j$ is given  in Theorem
\ref{thm4.3}. Then
$$\sum_{j\ge0}n_j\ll \sum_{j\le m}2^{jd}+
\sum_{j>m}2^{md(1+\vz)-d\vz j}\ll 2^{md},
$$ and $$\sum_{k=1}^\infty \dz_k\le \dz.$$Hence, we can take $C_1$ sufficiently large so that
$\sum_{j=0}^\infty n_j\le C_12^{md}\le n$. It follows from Lemma
\ref{lem4.2} that for $\beta\in (0,\frac1{2\mu(1/2-1/q)}),\ 2\le
q\le \infty$, $$\sum_{j=1}^{u_k}\oz_j^{-\beta(1/2-1/q)}\ll
2^{kd}2^{kd\beta(\frac12-\frac1q)}.$$If $j\le m$, then $n_j=u_j$,
and thence $\lz_{n_j,\dz_j}(V_j:\Bbb R^{u_j}\to \ell_q^{u_j},
\gz_{u_j})=0$. If $j>m$,  then taking $\frac1
\beta=(2+\vz)\mu(\frac12-\frac1q)$  and applying Lemma \ref{2.2}, we
obtain for $2\le q<\infty$,
\begin{equation}\label{5.1}\lz_{n_j,\dz_j}(V_j:\RR^{u_j}\to \ell_q^{u_j},\gz_{u_j})\ll 2^{jd(\frac12-\frac1q)-d(1+\vz)(m-j)(2+\vz)\mu(\frac12-\frac1q)}(2^{\frac{jd}q}+(\ln(\frac1\dz))^{\frac12}) ,
\end{equation}
and for $q=\infty$,
\begin{equation}\label{5.2}\lz_{n_j,\dz_j}(V_j:\RR^{u_j}\to \ell_q^{u_j},\gz_{u_j})\ll 2^{{jd}/2-d(1+\vz)(m-j)(2+\vz)\mu/2}
\sqrt{j+\ln(1/\dz)}.
\end{equation}
 Now we estimate the upper bounds for
$\lz_{n,\dz}(W_2^r,\mu,L_q)$ for $1\le q\le \infty$. For $2\le
q<\infty$, by \eqref{3.13} and \eqref{5.1} we get
\begin{align*}  &\quad\ \lz_{n,\dz}(W_{2,\mu}^r,\nu,L_{q,\mu})\\
&\ll \sum_{j=m+1}^{\infty}2^{-j(\rho-\frac
d2+\frac dq)}2^{-d(1+\vz)(m-j)(2+\vz)\mu(\frac12-\frac1q)}(2^{\frac{jd}q}+(\ln(\frac1\dz))^{\frac12})\\
&\ll 2^{-m(\rho-\frac d2+\frac dq)}\Big(2^{md/q}+
(\ln(1/\dz))^{1/2}\Big)\\
& \ll n^{-\rho/d+1/2}\big(1+n^{-2/q}\ln (1/\dz)\big)^{1/2} .
\end{align*}
For $q=\infty$, it follows from \eqref{3.13} and \eqref{5.2} that
\begin{align*} \lz_{n,\dz}(W_{2,\mu}^r,\nu,L_{\infty,\mu})
&\ll \sum_{j=m+1}^{\infty}2^{-j\rho}
2^{{jd}/2-d(1+\vz)(m-j)(2+\vz)\mu/2} \sqrt{j+\ln(1/\dz)}\\ &\ll
2^{-m(\rho-d/2)}\sqrt{m+\ln(1/\dz)}\\ & \ll n^{-\rho/d+1/2}\sqrt
{\ln (n/\dz)} . \end{align*}
 For $1\le q<2$, we have
$$\lz_{n,\dz}(W_{2,\mu}^r,\nu,L_{q,\mu})\le \lz_{n,\dz}(W_{2,\mu}^r,\nu,L_{2,\mu})\ll n^{-\rho/d+1/2}\big(1+n^{-1}\ln (1/\dz)\big)^{1/2}.$$
 The proof of Theorem \ref{thm1.1} is
complete.$\hfill\Box$

\

\noindent{\it Proof of Theorem \ref{thm1.2}.}\ \    By the
definition of $\lz_{n,\dz}(W_{2,\mu}^r,\nu,L_{q,\mu})$,  there
exists a linear operator $L_n$ with ${\rm rank}\le n$ such that for
any $\dz\in(0,1/2]$ and some subset $G_\dz\subset W_{2,\mu}^r$ with
$\nu(G_\dz)\le \dz$,
$$\sup_{f\in W_{2,\mu}^r\backslash G_\dz}\|f-L_nf\|_{q,\mu}\le 2\lz_{n,\dz}(W_{2,\mu}^r,\nu,L_{q,\mu}).$$
Consider the sequence $\{G_{2^{-k}}\}_{k=0}^\infty$
 of sets, where $G_1=W_{2,\mu}^r$. Then it follows from \eqref{1.6}
 that \begin{align*}\lz_n^{(a)}(W_{2,\mu}^r,\nu,L_{q,\mu})_p&\le\Big( \int_{W_{2,\mu}^r}\|f-L_nf\|_{q,\mu}^p\,\nu(df)\Big)^{1/p}\\
  &=\Big(\sum_{k=0}^\infty\int_{G_{2^{-k}}\backslash G_{2^{-k-1}}}\|f-L_nf\|_{q,\mu}^p\,\nu(df)\Big)^{1/p}\\
  &\le \Big(\sum_{k=0}^\infty(2\lz_{n,2^{-k-1}}(W_{2,\mu}^r,\nu,L_{q,\mu}))^p\,\nu(G_{2^{-k}})\Big)^{1/p}\\ &\ll
  \Big(\sum_{k=0}^\infty2^{-k}(\lz_{n,2^{-k-1}}(W_{2,\mu}^r,\nu,L_{q,\mu}))^p\Big)^{1/p}\\ &\ll \bigg\{\begin{array}{ll}
n^{-\rho/d+1/2},\ \ \ \ &1\le q<\infty,\\
n^{-\rho/d+1/2}\sqrt{\ln (en)}, &q=\infty.\end{array}\end{align*}

For the proof of the lower estimates for
$\lz_n^{(a)}(W_{2,\mu}^r,\nu,L_{q,\mu})_p$, let $L_n$ be a linear
operator from $L_{q,\mu}$ to $L_{q,\mu}$ with rank at most $n$. We
set
$$G'=\{f\in W_{2,\mu}^r\ \big|\ \|f-L_nf\|_{q,\mu}\ge \frac
12\lz_{n,1/e}(W_{2,\mu}^r,\nu,L_{q,\mu})\}.$$ Then $\nu (G')\ge
1/e$. Otherwise, if $\nu(G')<1/e$, then by the definition of
probabilistic linear $(n,\dz)$-width, we have
$$\lz_{n,1/e}(W_{2,\mu}^r,\nu,L_{q,\mu})\le \sup_{f\in W_{2,\mu}^r\backslash
G'}\|f-L_nf\|_{q,\mu}\le \frac
12\lz_{n,1/e}(W_{2,\mu}^r,\nu,L_{q,\mu}),$$which leads to a
contradiction. Hence $\nu(G')\ge 1/e$. It follows from \eqref{1.6}
that for $0<p<\infty$,
\begin{align*}\Big(\int_{W_{2,\mu}^r}\|f-L_nf\|_{q,\mu}^p\,\nu(df)\Big)^{1/p}&\ge
\Big(\int_{G'}\|f-L_nf\|_{q,\mu}^p\,\nu(df)\Big)^{1/p}\\ &\ge \frac
12\lz_{n,1/e}(W_{2,\mu}^r,\nu,L_{q,\mu})\,(\nu(G'))^{1/p}\\ &\gg
\bigg\{\begin{array}{ll}
n^{-\rho/d+1/2},\ \ \ \ &1\le q<\infty,\\
n^{-\rho/d+1/2}\sqrt{\ln (en)},
&q=\infty,\end{array}\end{align*}which gives the required  lower
estimates for $\lz_n^{(a)}(W_{2,\mu}^r,\nu,L_{q,\mu})_p$. This
completes the proof of Theorem \ref{thm1.2}. $\hfill\Box$

\end{document}